\documentclass[11pt]{article}

\newcommand\version{August 7, 2026}

\usepackage[T1]{fontenc}
\usepackage{lmodern}
\usepackage{amsmath,amssymb,amsthm,mathtools}
\usepackage{microtype}
\usepackage[letterpaper,margin=1in]{geometry}
\usepackage{enumitem}
\usepackage{hyperref}
\hypersetup{
  colorlinks=true,
  linkcolor=black,
  citecolor=black,
  urlcolor=black,
  pdftitle={A counterexample to the Kato conjecture for positive commutators},
  pdfauthor={Rupert L. Frank and Paata Ivanisvili}
}

\allowdisplaybreaks

\newtheorem{theorem}{Theorem}
\newtheorem{proposition}[theorem]{Proposition}
\newtheorem{lemma}[theorem]{Lemma}
\newtheorem{corollary}[theorem]{Corollary}
\newtheorem{conjecture}[theorem]{Conjecture}
\theoremstyle{definition}

\theoremstyle{remark}
\newtheorem{remark}[theorem]{Remark}

\newcommand{\R}{\mathbb{R}}
\newcommand{\C}{\mathbb{C}}

\newcommand{\Fock}{\mathcal{F}}

\newcommand{\ip}[2]{\left\langle #1,#2\right\rangle}
\newcommand{\norm}[1]{\left\lVert #1\right\rVert}
\newcommand{\abs}[1]{\left\lvert #1\right\rvert}
\newcommand{\dd}{\,\mathrm{d}}

\newcommand{\Tr}{\operatorname{Tr}}

\title{A counterexample to the Kato conjecture\\for positive commutators}
\author{Rupert L. Frank\thanks{Mathematisches Institut, Ludwig-Maximilians Universit\"at M\"unchen, Theresienstr.~39, 80333 M\"unchen, Germany; Munich Center for Quantum Science and Technology, Schellingstr.~4, 80799 M\"unchen, Germany. Email: \texttt{r.frank@lmu.de}.}
\qquad
Paata Ivanisvili\thanks{Department of Mathematics, University of California, Irvine, 510C Rowland Hall, Irvine, CA 92697-3875, USA. Email: \texttt{pivanisv@uci.edu}.}}
\date{\version}

\begin{document}
\maketitle

\begin{abstract}
We disprove the conjectural converse to Kato's positivity criterion for commutators of functions of the canonical position and momentum operators $Q$ and $P$ by showing that the operator
\[
i\,[\,\arctan(P),\,\arctan(Q)\,]
\]
is nonnegative and nonzero.
\end{abstract}

\noindent\emph{2020 Mathematics Subject Classification.} Primary 47B47; Secondary 47A60, 47A63.

\noindent\emph{Key words and phrases.} Positive commutator, Howland--Kato problem, Kato class, positive-definite kernel, conditionally negative-definite kernel, canonical commutation relation.


\section{Introduction and main results}

Let $P$ and $Q$ be the unbounded, selfadjoint operators in $L^2(\R)$ given by $P=-i\frac{d}{dx}$ and $Q=x$, meaning multiplication by the coordinate $x$, with their natural domains. Among other things, we shall prove the following result.

\begin{theorem}\label{thm:main}
The operator
\begin{equation}\label{eq:main-positive}
C:=i\,[\,\arctan(P),\,\arctan(Q)\,]
\end{equation}
is nonnegative. It is trace class with
\begin{equation}\label{eq:trace-main}
\operatorname{Tr} C=\frac{\pi}{2} \,.
\end{equation}
\end{theorem}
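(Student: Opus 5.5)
By functional calculus I would compute the integral kernel of $C$ explicitly. Then I would read off the trace from the diagonal, and prove positivity by showing the kernel is positive definite. For the kernel: with $(m(P)g)^\wedge=m\,\hat g$, the kernel of $m(P)$ is $k_m(x-y)$, where $k_m(x)=(2\pi)^{-1}\int m(p)e^{ipx}\dd p$. Since $\frac{d}{dp}\arctan p=(1+p^2)^{-1}$, whose kernel is $\tfrac12 e^{-|x|}$, and since differentiation in $p$ corresponds to multiplication by $-ix$, I get $k_{\arctan}(x)=\frac{i}{2}\,\frac{e^{-|x|}}{x}$ in the principal value sense. Hence
\[
C(x,y)=i\,k_{\arctan}(x-y)\bigl(\arctan y-\arctan x\bigr)=\frac{e^{-|x-y|}}{2}\,\frac{\arctan x-\arctan y}{x-y}.
\]
This is a bounded continuous kernel, and it decays like $e^{-|x-y|}$ times $\min\{(1+x^2)^{-1/2},(1+y^2)^{-1/2}\}$-type factors. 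Its diagonal is $C(x,x)=\frac{1}{2(1+x^2)}$, so formally $\Tr C=\frac12\int_\R\frac{\dd x}{1+x^2}=\frac{\pi}{2}$. Once $C\ge 0$ is established, this is rigorous: a nonnegative operator with continuous kernel and integrable diagonal is trace class, with trace equal to the integral of the diagonal (a Mercer-type argument, e.g.\ via approximate identities). So everything reduces to positivity.

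For positivity I would substitute $x=\tan\alpha$, $y=\tan\beta$ with $\alpha,\beta\in(-\pi/2,\pi/2)$. Then $x-y=\sin(\alpha-\beta)/(\cos\alpha\cos\beta)$, so
\[
\frac{\arctan x-\arctan y}{x-y}=\cos\alpha\,\cos\beta\,\frac{\alpha-\beta}{\sin(\alpha-\beta)}.
\]
The factor $\cos\alpha\cos\beta$ is a congruence by a positive function. The measure change $\dd x=\dd\alpha/\cos^2\alpha$ is likewise harmless, since it amounts to a unitary map with a positive weight. Positivity of $C$ is therefore equivalent to positive definiteness of the kernel
\[
L(\alpha,\beta)=\exp\Bigl(-|\tan\alpha-\tan\beta|+\log\tfrac{\alpha-\beta}{\sin(\alpha-\beta)}\Bigr)\quad\text{on }(-\tfrac\pi2,\tfrac\pi2).
\]
By Schoenberg's theorem it suffices to show that
\[
N(\alpha,\beta)=|\tan\alpha-\tan\beta|-\log\frac{\alpha-\beta}{\sin(\alpha-\beta)}
\]
is conditionally negative definite; one should note that $N\ge 0$ and $N$ vanishes on the diagonal.

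This is the main obstacle. One cannot simply Schur-multiply, because $\theta\mapsto\theta/\sin\theta$ is not a positive definite function: it has its minimum, not its maximum, at $0$. So the two pieces genuinely have to be balanced against each other. What I would try first is to write $-\log\frac{\sin\theta}{\theta}=\sum_{n\ge1}\log\bigl(1-\theta^2/(n\pi)^2\bigr)^{-1}$, or an integral form, and express $N$ as an integral of manifestly conditionally negative definite kernels. The kernel $|\tan\alpha-\tan\beta|$ is conditionally negative definite (it is $|x-y|$ in the original variables). I would try to exhibit its defect relative to the $\alpha$-variable, namely $|\tan\alpha-\tan\beta|-c|\alpha-\beta|$ and the convexity of $\tan$, as dominating $\log(\theta/\sin\theta)$ in the sense of conditionally negative definite kernels.

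If that fails, I would fall back on a direct approach. I would expand $(\arctan x-\arctan y)/(x-y)=\int_0^1\frac{\dd t}{1+(tx+(1-t)y)^2}$ and use $e^{-|x-y|}$ as a Laplace-type superposition of Gaussians. The aim would be to write $C$ as $\int B_s^*B_s\,\dd\mu(s)$ for explicit operators $B_s$, or alternatively to compute $C$ in a Hermite/Fock basis and check nonnegativity there.
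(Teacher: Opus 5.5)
Your kernel formula $C(x,y)=\frac12e^{-|x-y|}\frac{\arctan x-\arctan y}{x-y}$ is correct, and so is the diagonal evaluation of the trace. The reduction, via $x=\tan\alpha$ and Schoenberg's theorem, to conditional negative definiteness of $N$ is also correct; your $N$ is exactly the paper's $D_1$. But that conditional negative definiteness is the whole content of the positivity claim, and the proposal does not prove it. Asking that the defect $|\tan\alpha-\tan\beta|-c|\alpha-\beta|$ dominate $F(\alpha-\beta)=\ln\frac{\alpha-\beta}{\sin(\alpha-\beta)}$ in the conditionally-negative-definite sense restates (in fact strengthens) what has to be shown; it is not an argument. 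The fallback ideas (Gaussian superpositions, a Hermite-basis computation) come with no mechanism either. Any proof has to play the growth of $\sec^2$ near $\pm\pi/2$ against the blow-up of $F(t)$ as $|t|\to\pi$. For instance, $F$ is not dominated by $c|\alpha-\beta|$ for any $c$. Your proposal does not say how to do this.

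The missing idea is the paper's Lemma~\ref{lem:drep}: write both parts of $N$ as quadratic forms evaluated on the same vector. For $\beta<\alpha$ put $h=\mathbf 1_{(\beta,\alpha)}\in L^2(-\pi/2,\pi/2)$. Then $|\tan\alpha-\tan\beta|=\int_\beta^\alpha\sec^2u\dd u$. Since $F(0)=F'(0)=0$ and $F''$ is even, $2F(\alpha-\beta)=\int_\beta^\alpha\int_\beta^\alpha F''(u-v)\dd u\dd v$. Hence
\[
N(\alpha,\beta)=\frac12\ip{h}{(2\sec^2-K)h},
\]
where $\sec^2$ acts by multiplication and $K$ is the integral operator with kernel $F''(u-v)$. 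Note that $h=h_\alpha-h_\beta$, with $h_\alpha$ the signed indicator of the interval between $0$ and $\alpha$. So $N$ is half a squared Hilbert-space distance, and hence conditionally negative definite, as soon as $2\sec^2-K\geq0$. Conjugating by $\cos$, this is the operator inequality $T\leq2$ for the kernel $T(\theta,\varphi)=\cos\theta\cos\varphi\,F''(\theta-\varphi)$.

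That bound is elementary. Pointwise $0\leq T(\theta,\varphi)\leq 3/5$: with $t=\theta-\varphi$, use $\cos\theta\cos\varphi\leq\cos^2\frac t2$ when $|t|\geq\pi/2$, and the monotonicity of $F''$ on $[0,\pi/2]$ otherwise. Schur's test then gives $\norm{T}\leq3\pi/5<2$.

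Once positivity is established, your Mercer-type argument for the trace is fine. The paper instead realizes $e^{-N}$ through exponential vectors in a Fock space. This gives $C=V^*V$ with $V$ Hilbert--Schmidt, and $\Tr C=\int_\R\norm{\Psi_x}^2\dd x=\pi/2$, all in one step.
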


One reason this theorem is interesting is because it gives a counterexample to a conjecture by Kato, as we explain next.


\subsection*{The Howland--Kato commutator problem}

If $f$ and $g$ are bounded real-valued Borel functions, then $f(P)$ and $g(Q)$ are bounded self-adjoint operators, and therefore
\[
i[\,f(P),\, g(Q)\,]:=i\,\bigl(f(P)g(Q)-g(Q)f(P)\bigr)
\]
is bounded and self-adjoint. 

For $a>0$, put
\[
S_a:=\{z\in\C:\abs{\operatorname{Im}z}<a\}
\]
and let $K_a$ denote the class of bounded real-valued functions $h$ on $\R$ that admit an analytic continuation, still denoted by $h$, to $S_a$ and satisfy
\begin{equation}\label{eq:Kato-sign}
\operatorname{Im}h(z)\,\operatorname{Im}z\geq 0
\qquad \text{for all}\ z\in S_a \,.
\end{equation}
Kato \cite{Kato} proved that
\begin{equation}\label{eq:Kato-sufficient}
f\in K_a,\quad g\in K_b,\quad ab\geq \frac{\pi}{2}
\qquad\Longrightarrow\qquad
i[\,f(P),g(Q)\,]\geq 0 \,.
\end{equation}
Since restriction to a smaller strip preserves membership in the corresponding Kato class, the condition $ab\geq\pi/2$ can equivalently be written with equality after decreasing one of the strip widths.

The conjectural converse, in the formulation of Herbst and Kriete \cite{HerbstKriete}, is the following.

\begin{conjecture}[Kato conjecture]\label{conj:Kato}
Let $f$ and $g$ be bounded real-valued functions and suppose that
\[
i[\,f(P),\,g(Q)\,]\geq 0,
\qquad
i[\,f(P),\, g(Q)\,]\neq 0.
\]
After replacing both $f$ and $g$ by their negatives if necessary, there are $a,b>0$ with $ab=\pi/2$ such that
\[
f\in K_a,
\qquad
g\in K_b.
\]
\end{conjecture}

More precisely, of course, the conjecture is that there are function $\tilde f\in K_a$ and $\tilde g\in K_b$, coinciding with $f$ and $g$ almost everywhere.

It is easy to see that the function $\arctan$ belongs to $K_1$, but not to $K_a$ for any $a>1$, and that the function $-\arctan$ does not belong to $K_a$ for any $a>0$; see Proposition \ref{prop:arctan-strip}. Therefore, Theorem~\ref{thm:main} implies the following.

\begin{corollary}
Conjecture \ref{conj:Kato} fails.
\end{corollary}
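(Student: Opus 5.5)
The corollary follows by combining Theorem~\ref{thm:main} with Proposition~\ref{prop:arctan-strip}. The strategy is to take $f=g=\arctan$ as the candidate counterexample and check that the conclusion of Conjecture~\ref{conj:Kato} cannot hold for it, even after the allowed sign flip and modification on null sets.

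First I verify the hypotheses of the conjecture. By Theorem~\ref{thm:main}, the operator $C=i[\arctan(P),\arctan(Q)]$ is nonnegative. It is also trace class with $\operatorname{Tr}C=\pi/2\neq 0$, so $C\neq 0$. Hence $f=g=\arctan$, which are bounded and real-valued, satisfy the hypotheses.

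Next I reduce the "almost everywhere" formulation to the functions themselves. Suppose $\tilde f\in K_a$ coincides a.e.\ with $\pm\arctan$. Then $\tilde f$ is the restriction to $\R$ of a function analytic on $S_a$, so it is continuous on $\R$. Two continuous functions that agree almost everywhere agree everywhere, so $\tilde f=\pm\arctan$ on $\R$. By the identity theorem, the analytic continuation of $\tilde f$ to $S_a$ coincides with that of $\pm\arctan$. The same reasoning applies to $\tilde g$. It therefore suffices to rule out memberships of $\pm\arctan$ itself.

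There are two sign cases.
\begin{itemize}
\item If both functions are replaced by their negatives, we would need $-\arctan\in K_a$ for some $a>0$. This is excluded by Proposition~\ref{prop:arctan-strip}.
\item If the signs are kept, we would need $\arctan\in K_a$ and $\arctan\in K_b$ with $ab=\pi/2$. Proposition~\ref{prop:arctan-strip} forces $a\le 1$ and $b\le 1$, so $ab\le 1<\pi/2$, a contradiction.
\end{itemize}
Thus the conjecture fails for this pair.

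All the real difficulty lies in Theorem~\ref{thm:main}, which we are allowed to assume. Within the corollary itself, the only point needing care is the passage from a.e.\ equality to equality of the analytic continuations, which is handled by continuity and the identity theorem.
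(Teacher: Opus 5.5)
Your proposal is correct and follows the paper's route exactly: Theorem~\ref{thm:main} supplies a nonnegative commutator that is nonzero because its trace is $\pi/2$, and Proposition~\ref{prop:arctan-strip} rules out both sign choices, since $\arctan\in K_a$ forces $a\le 1$ and hence $ab\le 1<\pi/2$, while $-\arctan$ lies in no $K_a$. Your continuity argument for the almost-everywhere formulation of the conjecture is a detail the paper leaves implicit, and you handle it correctly.
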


Let us put Conjecture \ref{conj:Kato} into its historical context. The problem arose from work of Howland \cite{Howland} on dense point spectrum.  In 1987 he observed that the pair
\[
f(x)=\arctan(x/2),
\qquad
g(x)=\tanh x
\]
gives a nonnegative commutator $i[\,f(P),g(Q)\,]$.  Kato subsequently began a systematic study of all pairs of bounded real functions producing a positive commutator \cite{Kato} and proved several interesting results.

An observation in Kato's paper is that membership in $K_a$ is equivalent to a positive-measure representation by translates of the hyperbolic tangent.  More precisely, if $\widehat a=\pi/(2a)$, then, up to an additive real constant, elements of $K_a$ have the form
\begin{equation}\label{eq:Kato-representation}
h(x)=\int_{\R}\tanh\bigl(\widehat a(x-t)\bigr)\dd\mu(t),
\end{equation}
where $\mu$ is a finite nonnegative measure.  This representation, together with an explicit study of the commutators $i[\,\tanh\alpha P,\tanh\beta Q\,]$, leads to the sufficient condition \eqref{eq:Kato-sufficient}.  Kato also solved the rank-one case and wrote that there was reason to expect the sufficient construction to describe all nonzero positive commutators.

Herbst and Kriete \cite{HerbstKriete} renewed the interest in the Howland--Kato commutator problem and Kato's conjecture. They proved several structural facts and formulated Conjecture~\ref{conj:Kato} explicitly.  Among other results, they showed that if a nonzero commutator is positive, then suitable versions of $f$ and $g$ are monotone and continuous. Froese and Herbst later established the absolute continuity of these versions and the integrability of their derivatives \cite[Theorem~1.1]{FroeseHerbst}.  The finite-rank problem was studied further in \cite{HerbstFiniteRank}.

Froese and Herbst \cite{FroeseHerbst} also obtained additional evidence for the conjecture and connected the problem with Loewner's theorem on operator-monotone functions.  In particular, they proved the conjectured strip conclusion under one-sided exponential-moment assumptions on the derivatives.  The example in Theorem~\ref{thm:main} lies outside that regime, since
\[
f'(x)=g'(x)=\frac{1}{1+x^2}
\]
has no nontrivial exponential moment in either direction.


\subsection*{A generalization and outline of the proof}

Theorem \ref{thm:main} is a special case of a more general construction, which we now outline. Let
\begin{equation}\label{eq:F-def}
F(t):=\ln\frac{t}{\sin t}
\qquad\text{for}\ t\in(-\pi,\pi) \,,
\end{equation}
where we note that the ratio $t/\sin t$ is positive on this interval and where we interpret this ratio as $1$ at $t=0$. This defines an even smooth function with
\begin{equation}\label{eq:Fpp}
F''(t)=\frac{1}{\sin^2t}-\frac{1}{t^2},
\qquad
F''(0)=\frac13.
\end{equation}

Let
\[
I:=\left(-\frac{\pi}{2},\frac{\pi}{2}\right)
\]
and let $T$ be the integral operator in $L^2(I)$ with integral kernel
\begin{equation}\label{eq:k-def}
T(\theta,\varphi)
:=\cos\theta\cos\varphi\,F''(\theta-\varphi),
\qquad \theta,\varphi\in I \,.
\end{equation}
The kernel in \eqref{eq:k-def} is bounded (see also the proof of Lemma \ref{lem:k-bound}), so $T$ is bounded (and even compact).

The operator $T$ leaves the subspaces of even and odd functions on $I$ invariant and we let $T_{\rm even}$ and $T_{\rm odd}$ denote the parts of $T$ on these subspaces.

\begin{proposition}\label{prop:poscommgen}
If $\alpha,\beta\in\R$ satisfy $0<\alpha\beta\leq 2\|T_{\rm even}\|^{-1}$, then
\[
i\,[\,\arctan(\alpha P),\arctan(\beta Q)\,]\geq0.
\]
Moreover, this operator is trace class and has trace
$\pi/2$.
\end{proposition}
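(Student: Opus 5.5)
The plan is to reduce positivity of the commutator to a quadratic-form inequality on $L^2(I)$ involving $T_{\rm even}$, and to obtain the trace from the kernel separately.

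\textbf{Step 1: scaling.} The dilation $(U_\lambda u)(x)=\lambda^{1/2}u(\lambda x)$ conjugates $P\mapsto\lambda^{-1}P$ and $Q\mapsto\lambda Q$. Hence $i[\arctan(\alpha P),\arctan(\beta Q)]$ is unitarily equivalent to $C_\gamma:=i[\arctan(\gamma P),\arctan(\gamma Q)]$ with $\gamma^2=\alpha\beta$. It therefore suffices to treat the one-parameter family $C_\gamma$, $0<\gamma^2\le 2\|T_{\rm even}\|^{-1}$.

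\textbf{Step 2: the kernel and the trace.} I will use
\[
\arctan(\gamma x)=\frac12\int_\R \frac{\gamma}{1+\gamma^2 s^2}\,\operatorname{sgn}(x-s)\dd s
\]
and the known kernel $\frac{i}{\pi}\,\mathrm{p.v.}\,(x-y)^{-1}$ of $\operatorname{sgn}(P)$ (with the sign convention matching the Fourier convention). Together these give an explicit integral kernel of $C_\gamma$:
\[
C_\gamma(x,y)=\frac{1}{2\pi}\,\widehat{f'}(x-y)\,\frac{g(x)-g(y)}{x-y},
\qquad f=\arctan(\gamma\,\cdot),\quad g=\arctan(\gamma\,\cdot).
\]
Here $\widehat{f'}(z)=\pi e^{-|z|/\gamma}$ up to normalization, and no principal value is needed anymore. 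The kernel is continuous, with diagonal
\[
C_\gamma(x,x)=\tfrac12\,\gamma\,g'(x)\cdot\gamma^{-1}=\tfrac{1}{2}g'(x)\,\cdot(\text{const}),
\]
so $\int C_\gamma(x,x)\dd x$ comes out as $\frac{1}{2\pi}\bigl(f(\infty)-f(-\infty)\bigr)\bigl(g(\infty)-g(-\infty)\bigr)=\pi^2/(2\pi)=\pi/2$. Once positivity is established, a nonnegative operator with continuous kernel whose diagonal is integrable is trace class, with trace equal to the integral of the diagonal (a Mercer-type argument). So the trace statement follows from positivity.

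\textbf{Step 3: positivity.} I will write $\langle u,C_\gamma u\rangle$ as a double integral and change variables to angles, $x=\tan\theta$ (and similarly on the Fourier side). The weights $\cos\theta\cos\varphi$ in \eqref{eq:k-def} should arise as the square roots of the Jacobians $1/(1+x^2)$, and the factor $\frac{g(x)-g(y)}{x-y}$ becomes $\gamma$-dependent divided differences of $\theta-\varphi$.

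The goal is an identity of the form
\[
\langle u,C_\gamma u\rangle=\Bigl\langle v,\Bigl(\tfrac{2}{\gamma^2}\cdot\mathrm{(positive\ multiplier)}-T\Bigr)v\Bigr\rangle
\]
for a transformed function $v\in L^2(I)$, or an analogous decomposition. The appearance of $F(t)=\ln(t/\sin t)$ suggests the route: $\frac{1}{\sin^2t}$ is the Fourier-series kernel of $\sum_n |n|e^{int}$-type (i.e.\ $|D|$) operators on the circle, and $\frac{1}{t^2}$ is its whole-line analogue. So $T$ is the compression of a difference of two Hilbert-transform-type kernels. After splitting into even and odd parts (via the reflection $x\mapsto-x$, which commutes with the structure), I expect the odd part to be automatically nonnegative, and the even part to be nonnegative precisely when $\gamma^2\|T_{\rm even}\|\le 2$.

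\textbf{Main obstacle.} The hard step is the exact algebraic identity in Step 3 that identifies the quadratic form with one involving $T$. I would first test it on Gaussians or Hermite functions, and use the Mellin diagonalization of $(|x|+|y|)^{-1}$ (symbol $\pi/\cosh(\pi\tau)$) to guess the correct intertwining map $u\mapsto v$.
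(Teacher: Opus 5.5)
Your Steps 1 and 2 are sound. The dilation reduces everything to $C_\gamma$ (use oddness of $\arctan$ if $\alpha,\beta<0$), and your kernel $\frac12 e^{-\abs{x-y}/\gamma}\frac{g(x)-g(y)}{x-y}$ is correct. Once this continuous kernel is known to be positive definite, its integrable diagonal $\frac12 g'$ gives trace class with trace $\pi/2$.

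The gap is Step 3. It is the entire content of the proposition, and you leave it as a hope. Moreover, the identity you aim for has the wrong structure. You have guessed the right operator, $2\gamma^{-2}-T$, but it does not enter $\ip{u}{C_\gamma u}$ linearly. If you carry out your substitution $x=s\tan\theta$, $y=s\tan\varphi$ with $s=1/\gamma$, you get $\frac{g(x)-g(y)}{x-y}=s^{-1}\cos\theta\cos\varphi\,e^{F(\theta-\varphi)}$, hence
\[
C_s(x,y)=\frac12\,\frac{s}{\sqrt{s^2+x^2}\sqrt{s^2+y^2}}\,e^{-D_s(x,y)},
\qquad
D_s(x,y)=s\abs{x-y}-F\bigl(\vartheta_s(x)-\vartheta_s(y)\bigr).
\]
So $F$ sits in an exponent, and $F''$ has not appeared at all.

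The missing idea is to write this exponent as a quadratic form. Let $h_x$ be the signed indicator of the interval between $0$ and $x$. One has $\norm{h_x-h_y}^2=\abs{x-y}$ and $\int_a^b\int_a^b F''(u-v)\dd u\dd v=2F(b-a)$. These give $D_s(x,y)=\frac12\ip{h_x-h_y}{(2s-B_s)(h_x-h_y)}$, where $B_s$ has kernel $\vartheta_s'(x)\vartheta_s'(y)F''(\vartheta_s(x)-\vartheta_s(y))$ and is unitarily equivalent to $s^{-1}T$.

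Positivity of $C_s$ then comes from a nonlinear mechanism. If $2s-B_s\geq0$, then $D_s$ is half a squared Hilbert-space distance, hence conditionally negative definite. Schoenberg's theorem then makes $e^{-D_s}$, and thus $C_s$, positive. Concretely, this is the factorization $C_s(x,y)=\ip{\Psi_x}{\Psi_y}$ with $\Psi_x$ a suitably weighted exponential vector of $(2s-B_s)^{1/2}h_x$ in Fock space, which also yields the trace. Testing on Gaussians or Mellin-diagonalizing $(\abs x+\abs y)^{-1}$ is not aimed at this structure.

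Two further points.

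First, your expectation that the odd part is automatically nonnegative rests on $T_{\rm odd}\leq0$, which needs proof. For example, it follows from $F''(t)=2\int_0^\infty r\cosh(rt)(e^{\pi r}-1)^{-1}\dd r$ together with $\cosh(r(\theta-\varphi))=\cosh(r\theta)\cosh(r\varphi)-\sinh(r\theta)\sinh(r\varphi)$. You state it only as an expectation.

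Second, your claim that nonnegativity holds \emph{precisely} when $\gamma^2\norm{T_{\rm even}}\leq2$ is unsupported. The condition $2s-B_s\geq0$ is equivalent to conditional negative definiteness of $D_s$. That is sufficient for positive definiteness of $e^{-D_s}$ but not necessary in general. So this route yields only the one-sided statement the proposition asserts.
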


\begin{proposition}\label{prop:normbound}
    We have $\|T_{\rm even}\| \leq 0.64$.
\end{proposition}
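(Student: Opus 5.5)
Since $T$ has a continuous, symmetric, real kernel on the bounded interval $I$, I will bound its norm on even functions by a Schur-type or Hilbert--Schmidt argument applied to the even part of the kernel.

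\textbf{Step 1: even kernel.} For even $u$ we have $(Tu)(\theta)=\int_0^{\pi/2} T_e(\theta,\varphi)u(\varphi)\dd\varphi$ with
\[
T_e(\theta,\varphi)=\cos\theta\cos\varphi\,\bigl(F''(\theta-\varphi)+F''(\theta+\varphi)\bigr),\qquad \theta,\varphi\in(0,\tfrac{\pi}{2}).
\]
So $\|T_{\rm even}\|$ equals the norm of this kernel on $L^2(0,\pi/2)$.

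\textbf{Step 2: positivity of $F''$.} The function $F''(t)=\sin^{-2}t-t^{-2}$ is positive and increasing in $|t|$ on $(-\pi,\pi)$. This follows from the partial fraction expansion $\sin^{-2}t=\sum_{k\in\mathbb Z}(t-k\pi)^{-2}$, which gives $F''(t)=\sum_{k\neq0}(t-k\pi)^{-2}$, a sum of convex positive terms. Hence $T_e$ has positive entries, and the Schur test with a positive weight $w$ gives
\[
\|T_{\rm even}\|\le \sup_\theta \frac{1}{w(\theta)}\int_0^{\pi/2} T_e(\theta,\varphi)\,w(\varphi)\dd\varphi .
\]
I would first try $w(\varphi)=\cos\varphi$, since this mimics the likely top eigenfunction. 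The resulting integrals $\int_0^{\pi/2}\cos^2\varphi\,F''(\theta\pm\varphi)\dd\varphi$ can be computed in closed form by integration by parts, using $\int\sin^{-2}=-\cot$ and $\int t^{-2}=-t^{-1}$. This reduces the bound to a one-variable supremum.

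\textbf{Step 3: the main obstacle.} The difficulty is sharpness: the bound must come in below $0.64$. The kernel blows up only mildly: near $\theta+\varphi\to\pi$, $F''$ is singular but the factor $\cos\theta\cos\varphi$ vanishes, so $T_e$ stays bounded. A crude Hilbert--Schmidt bound $\|T_e\|_{L^2}$ may already suffice, or may not.

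As a fallback, I would use that $T$ is positive semidefinite. Indeed, $F''(t)=\sum_{k\neq0}(t-k\pi)^{-2}$ is a positive-definite function on $(-\pi,\pi)$ in the appropriate sense, and presumably the paper shows $T\ge0$ elsewhere. Granting this, I would bound $\|T_{\rm even}\|$ by $\Tr(T_{\rm even})-\lambda_2$ with $\lambda_2\ge0$, or more robustly by $\sqrt{\Tr(T_{\rm even}^2)}$, evaluated via explicit integrals or rigorous numerics.

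Alternatively, I would expand in the even Fourier basis $\cos(2n\theta)$ on $I$ and compute the first few matrix entries exactly. A Gershgorin or Schur-complement argument would then bound the tail using the decay of the coefficients. I expect the concrete numerical verification of the bound $0.64$, done with rigorous error control, to be the main labor.
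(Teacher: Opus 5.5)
Your proposal does not reach $0.64$. The one concrete estimate you set up fails badly, and the fallbacks are either false or too lossy for a target only about $0.5\%$ above the true value $\norm{T_{\rm even}}\approx0.6368$.

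\textbf{The Schur test with $w=\cos$.} The Schur ratio is $(T\cos)(\theta)/\cos\theta=\int_I\cos^2\varphi\,F''(\theta-\varphi)\dd\varphi$. At $\theta=0$ it equals $2(\operatorname{Si}(\pi)-\frac\pi2)\approx0.56$. As $\theta\to\pm\frac\pi2$ it tends to $\int_0^\pi\bigl(1-\frac{\sin^2t}{t^2}\bigr)\dd t=\pi-\operatorname{Si}(2\pi)\approx1.72$, so the test gives a bound above $1.7$. The top eigenfunction is $\cos\theta$ times a factor that grows toward the endpoints. A sup-type test would need a weight matching it to within $0.5\%$ in ratio, which you do not have.

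\textbf{The positivity claim.} Your fallback rests on a false statement: $T$ is not positive semidefinite, and $F''$ is not a positive-definite function. A positive-definite function satisfies $\abs{f(t)}\le f(0)$, whereas $F''$ is minimal at $t=0$. What holds is $F''(t)=2\int_0^\infty\frac{r\cosh(rt)}{e^{\pi r}-1}\dd r$, a positive mixture of $\cosh$'s. Via $\cosh(r(\theta-\varphi))=\cosh\cosh-\sinh\sinh$, this gives $T_{\rm even}\ge0$ but $T_{\rm odd}\le0$.

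\textbf{The other options.} Even granting $T_{\rm even}\ge0$, each listed option fails.
\begin{itemize}
\item Using $\Tr T_{\rm even}-\lambda_2$ with only $\lambda_2\ge0$ gives $\tau=\frac{\pi}{12}+\frac14\operatorname{Si}(\pi)\approx0.725$. By my numerical estimate $\lambda_2\approx0.065$, which would still leave about $0.66$.
\item On the same estimate, the Hilbert--Schmidt norm is about $(0.6368^2+0.065^2+\cdots)^{1/2}\approx0.640$. That leaves no margin, and I estimate it is slightly above $0.64$.
\item Gershgorin pays the full $\ell^1$ norm of the couplings of the leading mode, about $0.15$ in the basis $\cos((2n+1)\theta)$, against a margin of about $0.017$.
\end{itemize}

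\textbf{What is missing.} You need a cheap but sharp control of everything orthogonal to a good trial vector. The paper takes $e_0=\sqrt{2/\pi}\cos\theta$ and views $T_{\rm even}$ as a block operator on $\C e_0\oplus e_0^\perp$.
\begin{itemize}
\item Since $T_{\rm even}\ge0$ is trace class with explicitly computable trace, the lower-right block $C$ satisfies $\norm{C}\le\Tr C=\tau-a_0$.
\item Here $a_0=\ip{e_0}{T_{\rm even}e_0}=\operatorname{Si}(2\pi)+\operatorname{Cin}(2\pi)/\pi-\frac\pi2\approx0.6233$, computed in closed form.
\item The off-diagonal block has norm $\rho_0$, with $\rho_0^2=\sum_{n\ge1}\ip{e_n}{T_{\rm even}e_0}^2<0.0089$. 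This comes from the $\cosh$ representation in the basis $e_n=\sqrt{2/\pi}\cos((2n+1)\theta)$ together with a telescoping bound.
\item The resulting $2\times2$ majorant gives $\norm{T_{\rm even}}\le\frac12\bigl(\tau+\sqrt{(2a_0-\tau)^2+4\rho_0^2}\bigr)<0.63994$.
\end{itemize}
Your ``Schur-complement'' remark points in this direction, but without the trace bound on the tail block and an explicit residual estimate it is not an argument. The paper's final margin is about $10^{-4}$, so there is little room for cruder bounds on either piece.
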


This bound is rather tight, as from Lemma \ref{lem:trace-and-rayleigh} below it follows that $\|T_{\rm even}\|\geq 0.62$. Without proof we mention that numerical computations (both a Galerkin computation and a Gauss--Legendre Nystr\"om discretization) suggest $\|T_{\rm even}\|\approx 0.6368$.

Note that Theorem \ref{thm:main} is an immediate consequence of Propositions \ref{prop:poscommgen} and \ref{prop:normbound}. In fact, the proof of Theorem \ref{thm:main} is much simpler. Instead of Proposition \ref{prop:poscommgen}, we only need the weaker version with $\|T_{\rm even}\|$ replaced by $\|T\|$, which makes Lemma \ref{lem:T-parity} unnecessary. Moreover, instead of Proposition \ref{prop:normbound} we only need the much simpler bound $\|T\|\leq 2$, which is proved in Lemma \ref{lem:k-bound}.


\subsection*{Outline of this paper}

In Sections \ref{sec:poscommgen} and \ref{sec:normbound} we will prove Propositions \ref{prop:poscommgen} and \ref{prop:normbound}, respectively. In the final Section \ref{sec:arctank1} we determine the maximal Kato strip of the arctangent.


\section{Proof of Proposition \ref{prop:poscommgen}}\label{sec:poscommgen}

Most of our work in this section concerns the family of operators
$C_s$, $s>0$, defined by
\[
C_s:=i\,[\,\arctan(P/s),\arctan(Q/s)\,]
\qquad\text{in }L^2(\R).
\]


\subsection*{The integral kernel of $C_s$}

\begin{lemma}\label{lem:intkernel}
The operator $C_s$ has integral kernel
\[
C_s(x,y)=\frac12\,
\frac{s}{\sqrt{s^2+x^2}\sqrt{s^2+y^2}}\,e^{-D_s(x,y)},
\qquad x,y\in\R,
\]
where
\begin{equation}\label{eq:Ds-def}
D_s(x,y):=s\abs{x-y}
-F\bigl(\vartheta_s(x)-\vartheta_s(y)\bigr)
\end{equation}
and
\begin{equation}\label{eq:theta-s}
\vartheta_s(x):=\arctan(x/s).
\end{equation}
\end{lemma}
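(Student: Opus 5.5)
The plan is to compute the kernel of the commutator directly from the convolution kernel of $\arctan(P/s)$, and then to rewrite the resulting difference quotient with a trigonometric identity. Write $f(\xi)=\arctan(\xi/s)$ and $g=\vartheta_s$, so that $g(Q)$ is multiplication by $\vartheta_s(x)$. With the convention $(h(P)u)(x)=\int k_h(x-y)u(y)\dd y$, where $k_h(z)=(2\pi)^{-1}\int h(\xi)e^{i\xi z}\dd\xi$ in the distributional sense, the commutator formally has kernel
\[
i\,k_f(x-y)\bigl(g(y)-g(x)\bigr).
\]
Since $g(y)-g(x)$ vanishes on the diagonal, only the kernel $k_f$ off the diagonal matters. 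In particular, additive constants in $f$ and the principal-value singularity at $z=0$ play no role.

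To find $k_f$ I will use the derivative instead of $f$ itself. The function $f'(\xi)=s/(s^2+\xi^2)$ has the classical inverse Fourier transform $k_{f'}(z)=\tfrac12 e^{-s\abs{z}}$. Integrating by parts in $\xi$ gives $k_{f'}(z)=-iz\,k_f(z)$. Hence, for $z\neq0$,
\[
k_f(z)=\frac{i\,e^{-s\abs z}}{2z}.
\]
Substituting this into the formula above yields
\[
C_s(x,y)=\frac12\,e^{-s\abs{x-y}}\,\frac{\vartheta_s(x)-\vartheta_s(y)}{x-y}.
\]

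Next I apply a trigonometric identity. Put $t=\vartheta_s(x)-\vartheta_s(y)\in(-\pi,\pi)$. Using $\sin\vartheta_s(x)=x/\sqrt{s^2+x^2}$ and $\cos\vartheta_s(x)=s/\sqrt{s^2+x^2}$, the subtraction formula gives
\[
\sin t=\frac{s(x-y)}{\sqrt{s^2+x^2}\sqrt{s^2+y^2}}.
\]
Therefore
\[
\frac{t}{x-y}=\frac{s}{\sqrt{s^2+x^2}\sqrt{s^2+y^2}}\cdot\frac{t}{\sin t}
=\frac{s}{\sqrt{s^2+x^2}\sqrt{s^2+y^2}}\,e^{F(t)},
\]
and combining this with the factor $\tfrac12 e^{-s\abs{x-y}}$ gives exactly the claimed expression $e^{-D_s(x,y)}$. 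The case $x=y$ follows by continuity, with $t/\sin t=1$ there.

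The main obstacle is making the formal kernel computation rigorous, since $f$ is bounded but not integrable. I would argue weakly. Take $u,v$ in the Schwartz class and write
\[
\langle v,\,i[f(P),g(Q)]u\rangle=i\bigl(\langle v,f(P)(gu)\rangle-\langle gv,f(P)u\rangle\bigr).
\]
Approximate $f$ by $f_\varepsilon(\xi)=f(\xi)e^{-\varepsilon\xi^2}$, or compute $\widehat f$ as the distribution $\tfrac{i}{2}\,\mathrm{p.v.}\,e^{-s\abs z}/z$ plus a multiple of $\delta$. The $\delta$-part cancels in the commutator. The principal-value part, multiplied by $g(y)-g(x)$, becomes a locally bounded function, because $g$ is Lipschitz. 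The resulting kernel is bounded by $\tfrac{1}{2s}e^{-s\abs{x-y}}$, which is integrable in each variable separately. By Schur's test this kernel defines a bounded operator, so the identity extends from Schwartz functions to all of $L^2(\R)$.
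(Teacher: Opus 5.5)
Your proposal is correct and follows the paper's proof. Both arguments obtain the divided-difference kernel $\frac12 e^{-s\abs{x-y}}\,\frac{\vartheta_s(x)-\vartheta_s(y)}{x-y}$ from the Fourier transform of $f'(\xi)=s/(s^2+\xi^2)$, and then rewrite it with the identity that the paper states as $\tan\theta-\tan\varphi=\sin(\theta-\varphi)/(\cos\theta\cos\varphi)$ after substituting $x=s\tan\theta$, $y=s\tan\varphi$. The one difference is that the paper cites the general commutator-kernel formula from Herbst and Kriete (their Lemma~3.3), whereas your distributional argument proves it directly in this case: principal value plus a $\delta$ annihilated by $g(y)-g(x)$ on the diagonal, then Schur's test with the bound $\frac{1}{2s}e^{-s\abs{x-y}}$. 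This is a valid self-contained substitute.
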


\begin{proof}
Using the convention
\begin{equation}\label{eq:Fourier-convention}
\widehat h(\xi)=\frac{1}{\sqrt{2\pi}}
\int_{\R}e^{-ix\xi}h(x)\dd x
\end{equation}
for the Fourier transform, the integral kernel of
$i[\,f(P),g(Q)\,]$ is
\begin{equation}\label{eq:general-kernel}
\frac{1}{\sqrt{2\pi}}\,
\frac{g(x)-g(y)}{x-y}\,\widehat{f'}(y-x).
\end{equation}
Under the assumptions that $f$ is monotone increasing and bounded and
that $g$ is smooth and bounded with bounded derivatives, this is shown
in \cite[Lemma~3.3]{HerbstKriete}.

Since
\[
\vartheta_s'(x)=\frac{s}{s^2+x^2},
\qquad
\widehat{\vartheta_s'}(\xi)=\sqrt{\frac\pi2}\,e^{-s|\xi|},
\]
we obtain
\begin{equation}\label{eq:Cs-kernel}
C_s(x,y)=\frac12e^{-s\abs{x-y}}
\frac{\vartheta_s(x)-\vartheta_s(y)}{x-y}.
\end{equation}
Writing $x=s\tan\theta$ and $y=s\tan\varphi$, we have
\begin{equation}\label{eq:tan-difference}
x-y=s\bigl(\tan\theta-\tan\varphi\bigr)
=s\,\frac{\sin(\theta-\varphi)}{\cos\theta\cos\varphi},
\end{equation}
so
\begin{align*}
C_s(x,y)
&=\frac{1}{2s}\cos\theta\cos\varphi\,e^{-s\abs{x-y}}
\frac{\theta-\varphi}{\sin(\theta-\varphi)}\\
&=\frac{\cos\theta\cos\varphi}{2s}
\exp\!\left(-s\abs{x-y}+F(\theta-\varphi)\right)\\
&=\frac12\,
\frac{s}{\sqrt{s^2+x^2}\sqrt{s^2+y^2}}\,e^{-D_s(x,y)}.
\end{align*}
This completes the proof of the lemma.
\end{proof}

\begin{remark}
The divided-difference kernel
\begin{equation}\label{eq:kernelfree}
\frac{\vartheta_s(x)-\vartheta_s(y)}{x-y},
\qquad x,y\in\R,
\end{equation}
is not positive semidefinite for any $s>0$, so nonnegativity of
$C_s$ emerges only after multiplication by $e^{-s\abs{x-y}}$.  Indeed,
positive semidefiniteness of \eqref{eq:kernelfree} on all of $\R$ would
be equivalent to operator monotonicity of $\vartheta_s$ on the whole
real line \cite[Theorem~5.1]{Simon2}.  This is
impossible for a nonconstant bounded function
\cite[Theorem~1.1]{Simon2}.
\end{remark}


\subsection*{A formula for $D_s(x,y)$}

Next, we derive a convenient representation of $D_s(x,y)$. For $x\in\R$, we define $h_x\in L^2(\R)$ by
\begin{equation}\label{eq:hx-def}
h_x(t):=
\begin{cases}
\mathbf 1_{(0,x)}(t)& \text{if}\ x\geq0,\\
-\mathbf 1_{(x,0)}(t)& \text{if}\ x<0.
\end{cases}
\end{equation}
Also, let $B_s$ be the integral operator in $L^2(\R)$ with integral kernel
\begin{equation}\label{eq:Bs-kernel}
B_s(x,y)=\vartheta_s'(x)\vartheta_s'(y)
F''\bigl(\vartheta_s(x)-\vartheta_s(y)\bigr).
\end{equation}

\begin{lemma}\label{lem:drep}
For all $x,y\in\R$,
\begin{equation}\label{eq:drep}
D_s(x,y)=\frac12\ip{h_x-h_y}{(2sI-B_s)(h_x-h_y)}
\qquad\text{for all}\ x,y\in\R \,.
\end{equation}
\end{lemma}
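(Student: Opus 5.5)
We split the quadratic form on the right of \eqref{eq:drep} into its two pieces and compute each one separately. Write $u:=h_x-h_y$. The first piece is elementary. For $x\ge y$ one checks from \eqref{eq:hx-def}, in each of the three cases $0\le y\le x$, $y<0\le x$ and $y\le x<0$, that $u=\mathbf 1_{(y,x)}$. The case $x<y$ is the same with the sign of $u$ reversed. Hence $\|u\|^2=|x-y|$, so $\frac12\ip{u}{2s u}=s|x-y|$. It then remains to show
\[
\frac12\ip{u}{B_s u}=F\bigl(\vartheta_s(x)-\vartheta_s(y)\bigr).
\]
Because both sides are invariant under $u\mapsto -u$, we may assume $x\ge y$.

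For $x\ge y$ the left side equals
\[
\frac12\int_y^x\!\!\int_y^x \vartheta_s'(t)\vartheta_s'(t')F''\bigl(\vartheta_s(t)-\vartheta_s(t')\bigr)\dd t\dd t'.
\]
We substitute $\theta=\vartheta_s(t)$ and $\varphi=\vartheta_s(t')$; the factors $\vartheta_s'$ are exactly the Jacobians. With $a=\vartheta_s(y)$ and $b=\vartheta_s(x)$, both in $I$, this becomes
\[
\frac12\int_a^b\!\!\int_a^b F''(\theta-\varphi)\dd\theta\dd\varphi .
\]
Integrating in $\theta$ gives $F'(b-\varphi)-F'(a-\varphi)$. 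Integrating this in $\varphi$ over $(a,b)$ gives
\[
\bigl[-F(b-\varphi)+F(a-\varphi)\bigr]_{\varphi=a}^{\varphi=b}=-F(0)+F(a-b)+F(b-a)-F(0).
\]
Since $F$ is even and $F(0)=\ln 1=0$, this equals $2F(b-a)$. The factor $\frac12$ therefore yields $F(\vartheta_s(x)-\vartheta_s(y))$, which is what we need. Subtracting this from $s|x-y|$ reproduces $D_s(x,y)$ as defined in \eqref{eq:Ds-def}.

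The only point requiring care, and the step I expect to be the main obstacle, is justifying these manipulations. We need $F''$ to be smooth and bounded on the range of differences that actually occurs. Since $a,b\in I$, we have $|\theta-\varphi|<\pi$, where $F$ is smooth by \eqref{eq:Fpp}. Moreover the difference stays within $|b-a|<\pi$, a compact subinterval of $(-\pi,\pi)$, so $F''$ is bounded on the compact square $[a,b]^2$. This makes the iterated integrals absolutely convergent, and the fundamental theorem of calculus applies to $F'$ on the relevant intervals. We do not need $B_s$ to be bounded on all of $L^2(\R)$: because $u$ is compactly supported, the pairing $\ip{u}{B_s u}$ is simply the double integral above. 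Even so, $B_s$ is in fact bounded, being unitarily equivalent to $T$ through the substitution.
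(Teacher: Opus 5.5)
Your proposal is correct and follows essentially the same route as the paper. You write $h_x-h_y=\pm\mathbf 1$ of the interval between $x$ and $y$ and substitute $\theta=\vartheta_s(t)$, which turns $\ip{u}{B_su}$ into $\int_a^b\int_a^b F''(\theta-\varphi)\,d\theta\,d\varphi$; you then evaluate this by two direct antidifferentiations, using that $F$ is even and $F(0)=0$, whereas the paper rewrites it as $2\int_0^L(L-r)F''(r)\,dr$ and integrates by parts using $F(0)=F'(0)=0$ — a purely cosmetic difference.
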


\begin{proof}
By symmetry we may assume $x<y$.  Then
$h_y-h_x=\mathbf 1_{(x,y)}$ and, from \eqref{eq:Bs-kernel},
\begin{align}
\ip{\mathbf 1_{(x,y)}}{(2sI-B_s)\mathbf 1_{(x,y)}}
&=2s(y-x)-\int_x^y\int_x^y
\vartheta_s'(r)\vartheta_s'(t)
F''\bigl(\vartheta_s(r)-\vartheta_s(t)\bigr)
\dd r\dd t.
\label{eq:interval-step1}
\end{align}
Put $\alpha=\vartheta_s(x)$ and $\beta=\vartheta_s(y)$.  The change
of variables $u=\vartheta_s(r)$ and $v=\vartheta_s(t)$ turns the
double integral into
\begin{equation}\label{eq:double-Fpp}
\int_\alpha^\beta\int_\alpha^\beta F''(u-v)\dd u\dd v.
\end{equation}
Writing $L=\beta-\alpha>0$, translation invariance and evenness give
\[
\int_\alpha^\beta\int_\alpha^\beta F''(u-v)\dd u\dd v
=2\int_0^L(L-r)F''(r)\dd r=2F(L),
\]
where the last identity follows by integration by parts and
$F(0)=F'(0)=0$.  Substitution into \eqref{eq:interval-step1} yields
\[
\ip{\mathbf 1_{(x,y)}}{(2sI-B_s)\mathbf 1_{(x,y)}}
=2s(y-x)-2F(\beta-\alpha)=2D_s(x,y),
\]
which is \eqref{eq:drep}.
\end{proof}


\subsection*{The signs of the parity parts of $T$}

\begin{lemma}\label{lem:T-parity}
The parity restrictions of $T$ satisfy
\[
T_{\rm even}\geq0,
\qquad
T_{\rm odd}\leq0.
\]
\end{lemma}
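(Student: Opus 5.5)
The plan is to expand $F''$ by partial fractions so that every term becomes an explicit Laplace-type integral whose sign is fixed by parity.

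\textbf{Partial fractions.} Use the classical expansion $\frac{1}{\sin^2 t}=\sum_{n\in\mathbb Z}\frac{1}{(t-n\pi)^2}$. Then for $t\in(-\pi,\pi)$,
\[
F''(t)=\sum_{n\geq1}\Bigl(\frac{1}{(t-n\pi)^2}+\frac{1}{(t+n\pi)^2}\Bigr),
\]
which is a sum of positive terms. For $f\in L^2(I)$ put $g(\theta):=\cos\theta\,f(\theta)$. Then
\[
\ip{f}{Tf}=\sum_{n\ge1}\int_I\int_I \overline{g(\theta)}g(\varphi)\Bigl(\frac{1}{(\theta-\varphi-n\pi)^2}+\frac{1}{(\theta-\varphi+n\pi)^2}\Bigr)\dd\theta\dd\varphi .
\]
Since $\theta-\varphi\in(-\pi,\pi)$, the quantity $\theta-\varphi-n\pi$ is strictly negative for every $n\ge1$, and $\theta-\varphi+n\pi$ is strictly positive.

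\textbf{Laplace representation.} For $u>0$ we have $u^{-2}=\int_0^\infty\lambda e^{-\lambda u}\dd\lambda$. Apply this with $u=n\pi-\theta+\varphi$ in the first term and $u=n\pi+\theta-\varphi$ in the second. Set $L(\lambda):=\int_I g(\theta)e^{\lambda\theta}\dd\theta$, which is finite for all real $\lambda$ since $g\in L^2(I)$ and $I$ is bounded. The $n$-th summand becomes
\[
\int_0^\infty\lambda e^{-\lambda n\pi}\Bigl(\overline{L(\lambda)}L(-\lambda)+\overline{L(-\lambda)}L(\lambda)\Bigr)\dd\lambda .
\]

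\textbf{Parity.} If $f$ is even, then $g$ is even and $L(-\lambda)=L(\lambda)$, so the bracket equals $2\abs{L(\lambda)}^2\ge0$. If $f$ is odd, then $L(-\lambda)=-L(\lambda)$ and the bracket equals $-2\abs{L(\lambda)}^2\le0$. Summing over $n\geq 1$ gives
\[
\ip{f}{Tf}=\pm 2\int_0^\infty\frac{\lambda\abs{L(\lambda)}^2}{e^{\lambda\pi}-1}\dd\lambda ,
\]
with the sign $+$ for even $f$ and $-$ for odd $f$. This yields $T_{\rm even}\ge0$ and $T_{\rm odd}\le0$.

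\textbf{Main obstacle.} The only delicate point is justifying the interchange of the $n$-sum, the $\lambda$-integral and the $(\theta,\varphi)$-integrals. I would apply Tonelli to $\abs{g(\theta)}\abs{g(\varphi)}$ times the positive kernels; this reduces everything to the finiteness of
\[
\int_I\int_I\abs{g(\theta)}\abs{g(\varphi)}\,F''(\theta-\varphi)\dd\theta\dd\varphi .
\]
The danger is the $n=1$ terms, which blow up only at the corners $(\theta,\varphi)=(\pm\pi/2,\mp\pi/2)$. The finiteness itself is immediate from the boundedness of the full kernel $\cos\theta\cos\varphi\,F''(\theta-\varphi)$ (Lemma~\ref{lem:k-bound}), since this gives
\[
\int_I\int_I\abs{g(\theta)}\abs{g(\varphi)}\,F''(\theta-\varphi)\dd\theta\dd\varphi
=\int_I\int_I\abs{f(\theta)}\abs{f(\varphi)}\cos\theta\cos\varphi\,F''(\theta-\varphi)\dd\theta\dd\varphi
\le \norm{T}\,\norm{f}^2 .
\]
Concretely, the factors $\cos\theta\cos\varphi$ vanish to second order at those corners, which cancels the double pole of $(\theta-\varphi\mp\pi)^{-2}$. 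If needed, one can also first prove the identity for continuous $f$ with compact support in $I$, where all interchanges are trivial, and then pass to general $f\in L^2(I)$ by density and the boundedness of $T$.
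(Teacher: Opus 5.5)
Your proof is correct and is essentially the paper's argument. The paper also combines the partial-fraction expansion with a Laplace integral, obtaining $F''(t)=2\int_0^\infty \frac{r\cosh(rt)}{e^{\pi r}-1}\dd r$, and then uses the addition formula for $\cosh(r(\theta-\varphi))$ to get $\ip{v}{Tv}=2\int_0^\infty\frac{r}{e^{\pi r}-1}\abs{\int_I v(\theta)\cos\theta\,\cosh(r\theta)\dd\theta}^2\dd r$ on even $v$, and the same with $\sinh$ and a minus sign on odd $v$. This coincides with your formula, since by parity $L(\lambda)$ reduces to the $\cosh$ or $\sinh$ transform. Your termwise factorization through $e^{\lambda\theta}$ is only a cosmetic variant. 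Your Tonelli justification via the nonnegative bounded kernel makes explicit the interchange that the paper covers by a bare appeal to Fubini.
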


\begin{proof}
For $|t|<\pi$ one has
\begin{equation}\label{eq:fdoubleprimerepr}
F''(t)=2\int_0^\infty
\frac{r\cosh(rt)}{e^{\pi r}-1}\dd r.
\end{equation}
Indeed, this follows from the facts that
\begin{align*}
F''(t)
&=\frac1{\sin^2t}-\frac1{t^2}
=\sum_{j=1}^\infty
\left(\frac1{(j\pi-t)^2}+\frac1{(j\pi+t)^2}\right)
=2\sum_{j=1}^\infty
\frac{(j\pi)^2+t^2}{\bigl((j\pi)^2-t^2\bigr)^2},
\end{align*}
while
\[
\int_0^\infty re^{-j\pi r}\cosh(rt)\dd r
=\frac{(j\pi)^2+t^2}{\bigl((j\pi)^2-t^2\bigr)^2}.
\]

For $v\in L^2(I)$, Fubini's theorem and
\[
\cosh(r(\theta-\varphi))
=\cosh(r\theta)\cosh(r\varphi)
-\sinh(r\theta)\sinh(r\varphi)
\]
give
\begin{align*}
\ip{v}{Tv}
={}&2\int_0^\infty\frac{r}{e^{\pi r}-1}
\left|\int_Iv(\theta)\cos\theta\cosh(r\theta)\dd\theta\right|^2\dd r
-2\int_0^\infty\frac{r}{e^{\pi r}-1}
\left|\int_Iv(\theta)\cos\theta\sinh(r\theta)\dd\theta\right|^2\dd r.
\end{align*}
Since $\cos\theta \cosh(r\theta)$ is an even function of $\theta$, the first term only sees the even part of $v$ and, similarly, the second part only sees the odd part of $v$. Thus, for $v$ from the relevant subspaces,
    \begin{align*}
        \ip{v}{T_{\rm even} v} & = 2 \int_0^\infty \frac{r}{e^{\pi r}-1} \left| \int_I v(\theta) \cos\theta \cosh(r\theta) \dd\theta\right|^2 \dd r \,,\\
        \ip{v}{T_{\rm odd} v} & = - 2 \int_0^\infty \frac{r}{e^{\pi r}-1} \left| \int_I v(\theta) \cos\theta \sinh(r\theta) \dd\theta\right|^2 \dd r \,.
    \end{align*}
    These formulas imply, in particular, the claimed assertion about the signs of the operators. 
\end{proof}


\subsection*{Proof of Proposition \ref{prop:poscommgen}}

We divide the proof of Proposition \ref{prop:poscommgen} into several steps.

\medskip

\emph{Step 0.} We being with some simple reductions. Since $\arctan$ is odd, we may assume that $\alpha,\beta$ are both positive. Next, by a simple scaling argument we may assume that $\alpha=\beta$ and we call this common parameter $1/s$. Thus, it suffices to consider the operator $C_s$.

\medskip

\emph{Step 1.} Combining Lemmas \ref{lem:intkernel} and \ref{lem:drep}, we see that the integral kernel of $C_s$ is given by
    \begin{equation}
        \label{eq:posommgenproof}
            C_s(x,y) = \frac12 \, \frac{s}{\sqrt{s^2+x^2}\sqrt{s^2+y^2}} \, e^{-\frac12 \langle (h_x-h_y),A_s(h_x-h_y)\rangle} \,,
    \end{equation}
    where we abbreviated $A_s:=2s-B_s$. (This is short for $A_s=2sI-B_s$ with $I$ the identity.)

\medskip

\emph{Step 2: Nonnegativity of $A_s$.}
    We begin by noting that the operator $B_s$ is unitarily equivalent to the operator $s^{-1} T$. To see this, define $U_s:L^2(\R)\to L^2(I)$ by
\begin{equation}\label{eq:Us-def}
(U_su)(\theta):=\sqrt{s}\,\sec\theta\,u(s\tan\theta).
\end{equation}
Since $\dd x=s\sec^2\theta\dd\theta$, this map is unitary.  If $v=U_su$, then
\[
u(s\tan\theta)=\frac{1}{\sqrt{s}}v(\theta)\cos\theta
\]
and
\[
\vartheta_s'(x)\dd x=\dd\theta.
\]
Therefore
\begin{align*}
    \ip{u}{B_su} & = \iint_{\R\times\R}\overline{u(x)}u(y)
\vartheta_s'(x)\vartheta_s'(y)
F''\bigl(\vartheta_s(x)-\vartheta_s(y)\bigr)
\dd x\dd y \\
& = \frac1s \iint_{I\times I} \overline{v(\theta)}v(\varphi) \cos\theta\cos\varphi\,F''(\theta-\varphi) \dd\theta \dd\varphi
= \frac1s\ip{v}{Tv}_{L^2(I)}.
\end{align*}
By polarization, this gives the claimed unitary equivalence.

As a consequence, $A_s$ is unitarily equivalent to $2s - s^{-1}T$, which is equal to the direct sum of $2s-s^{-1}T_{\rm even}$ and $2s-s^{-1} T_{\rm odd}$ on the even and odd subspaces of $L^2(I)$. By Lemma \ref{lem:T-parity}, the latter operator is nonnegative. The assumption $s^{-2} = \alpha\beta\leq 2\|T_{\rm even}\|^{-1}$ implies that $2s-s^{-1}T_{\rm even}$ is nonnegative, thus proving the nonnegativity of $A_s$.

\medskip

\emph{Step 3: Fock-space factorization and the trace.}
Let $\mathcal H:=L^2(\R)$ and put
\[
\Phi_x:=A_s^{1/2}h_x,
\qquad
q_x:=\norm{\Phi_x}^2.
\]
The operator $A_s$ commutes with complex conjugation, so each
$\Phi_x$ is real-valued and $\ip{\Phi_x}{\Phi_y}$ is real.  By
Lemma~\ref{lem:drep},
\begin{equation}\label{eq:Ds-distance}
D_s(x,y)=\frac12\norm{\Phi_x-\Phi_y}^2.
\end{equation}
Consider the symmetric Fock space
\[
\Fock_{\rm s}(\mathcal H):=\bigoplus_{k=0}^\infty \mathcal H^{\otimes_{\rm s}k},
\qquad \mathcal H^{\otimes_{\rm s}0}:=\C,
\]
and, for $\Phi\in \mathcal H$, its exponential vector
\[
\operatorname{Exp}(\Phi)
:=\bigoplus_{k=0}^\infty\frac{\Phi^{\otimes_{\rm s}k}}{\sqrt{k!}}.
\]
The identities
\begin{equation}\label{eq:exp-vector-identities}
\ip{\operatorname{Exp}(\Phi)}{\operatorname{Exp}(\Psi)}
=e^{\ip{\Phi}{\Psi}},
\qquad
\norm{\operatorname{Exp}(\Phi)}^2=e^{\norm{\Phi}^2}
\end{equation}
hold by direct expansion of the series. Defining
\begin{equation}\label{eq:Psi-x}
\Psi_x:=\sqrt{\frac{s}{2}}\,
\frac{e^{-q_x/2}}{\sqrt{s^2+x^2}}\,
\operatorname{Exp}(\Phi_x)
\in\Fock_{\rm s}(\mathcal H) \,,
\end{equation}
equations \eqref{eq:Ds-distance},
\eqref{eq:exp-vector-identities} and \eqref{eq:posommgenproof} imply
\begin{align}
\ip{\Psi_x}{\Psi_y}
&=\frac{s}{2\sqrt{s^2+x^2}\sqrt{s^2+y^2}} \,
\exp\!\left(-\frac{q_x+q_y}{2}+\ip{\Phi_x}{\Phi_y}\right)\notag\\
&=\frac{s}{2\sqrt{s^2+x^2}\sqrt{s^2+y^2}} \,e^{-D_s(x,y)}
=C_s(x,y).
\label{eq:Cs-Gram}
\end{align}

Define $V:L^2(\R)\to\Fock_{\rm s}(\mathcal H)$ by
\[
Vu:=\int_{\R}\Psi_xu(x)\dd x \,,
\]
where the definition is understood in the weak sense. This is well-defined, since $x\mapsto\Psi_x$ is weakly measurable (since $\norm{h_x-h_y}^2=\abs{x-y}$ implies that $x\mapsto h_x$ is continuous as an $L^2$-valued map) and since, from
\eqref{eq:exp-vector-identities},
\begin{equation}\label{eq:Psi-norm}
\norm{\Psi_x}^2
=\frac{s}{2(s^2+x^2)},
\end{equation}
which implies
\begin{equation}\label{eq:Psi-L2}
\int_{\R}\norm{\Psi_x}^2\dd x
=\frac12\int_{\R}\frac{s}{s^2+x^2}\dd x
=\frac\pi2.
\end{equation}

It follows from \eqref{eq:Cs-Gram} that
\[
C_s=V^*V\geq0,
\]
and, using the standard criterion to verify the Hilbert--Schmidt property,
\[
\operatorname{Tr}C_s=\norm{V}_{\rm HS}^2=\int_{\R}\norm{\Psi_x}^2\dd x=\frac\pi2.
\]
This completes the proof of Proposition \ref{prop:poscommgen}.
\qed

\begin{remark}
    The argument in Step 3 is of a general nature and related to work of Schoenberg \cite{Schoenberg}. Under the hypothesis $A_s\geq0$, Lemma~\ref{lem:drep} expresses
$D_s$ as one half of a squared Hilbert-space distance.  In
Schoenberg's terminology, $D_s$ is therefore conditionally negative
definite, and Schoenberg's theorem \cite{Schoenberg} implies directly
that $e^{-D_s}$ is a positive-semidefinite kernel.  The Fock-space
argument above is an explicit factorization of this conclusion and,
in addition, yields the trace-class assertion.
\end{remark}


\section{Proof of Proposition \ref{prop:normbound}}\label{sec:normbound}

\subsection{A simple bound}

We emphasize again that the following simple bound is enough to prove Theorem \ref{thm:main}.

\begin{lemma}\label{lem:k-bound}
    We have $\|T_{\rm even}\| \leq \|T\| \leq \frac{3\pi}5$.
\end{lemma}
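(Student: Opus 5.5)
The first inequality is immediate. $T_{\rm even}$ is the restriction of $T$ to an invariant subspace, so $\|T_{\rm even}\|\le\|T\|$. For $\|T\|$ I would use the Hilbert--Schmidt bound $\|T\|\le\|T\|_{\rm HS}$, or more crudely the Schur test.

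I first bound $F''$ on the range of $\theta-\varphi$, which is $(-\pi,\pi)$. There $F''$ is unbounded near $\pm\pi$, but the factor $\cos\theta\cos\varphi$ compensates. So I work with the full kernel
\[
T(\theta,\varphi)=\cos\theta\cos\varphi\Bigl(\frac1{\sin^2(\theta-\varphi)}-\frac1{(\theta-\varphi)^2}\Bigr).
\]
By \eqref{eq:fdoubleprimerepr}, $F''$ is positive, even and increasing in $|t|$. The key pointwise estimate uses $\sin(\theta-\varphi)=\sin\theta\cos\varphi-\cos\theta\sin\varphi$. Writing $\theta-\varphi=\pi-\delta$ when $\theta\to\pi/2$ and $\varphi\to-\pi/2$, one has $|\sin(\theta-\varphi)|\ge$ something comparable to $\cos\theta+\cos\varphi$. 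A concrete route is the identity
\[
\sin(\theta-\varphi)=\sin\bigl((\tfrac\pi2-\varphi)-(\tfrac\pi2-\theta)\bigr).
\]
In the regime $\theta-\varphi\ge\pi/2$ both $\pi/2-\theta$ and $\pi/2+\varphi$ are small, and then
\[
|\sin(\theta-\varphi)|=\sin\bigl((\tfrac\pi2-\theta)+(\tfrac\pi2+\varphi)\bigr)\ge \tfrac{2}{\pi}\bigl((\tfrac\pi2-\theta)+(\tfrac\pi2+\varphi)\bigr)\ge \tfrac2\pi(\cos\theta+\cos\varphi).
\]
Combined with AM--GM, $\cos\theta\cos\varphi/\sin^2(\theta-\varphi)\le \pi^2/16$ in that regime. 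For $|\theta-\varphi|\le\pi/2$ I simply use $F''(\theta-\varphi)\le F''(\pi/2)=1-4/\pi^2$. Together these show that $T$ is bounded, with an explicit bound like $\sup|T|\le\pi^2/16+\dots$. The trivial estimate $\|T\|\le\pi\sup|T|$ is then probably slightly too weak, since $\pi\cdot\pi^2/16\approx1.94>3\pi/5\approx1.88$.

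I therefore plan to use the Hilbert--Schmidt norm instead:
\[
\|T\|^2\le\iint_{I\times I}\cos^2\theta\cos^2\varphi\,F''(\theta-\varphi)^2\dd\theta\dd\varphi.
\]
I would change variables to $u=\theta-\varphi$, $w=\theta+\varphi$, integrate out $w$ explicitly (products of cosines give trigonometric polynomials in $u$), and reduce to a one-dimensional integral $\int_0^\pi G(u)F''(u)^2\dd u$. Here $G(u)$ vanishes like $(\pi-u)^3$ at $u=\pi$, which cancels the $(\pi-u)^{-4}$ blow-up only to order $(\pi-u)^{-1}$. That is not integrable, so Hilbert--Schmidt fails near the corner.

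The fix is a Schur test with a weight, which I would choose as $w(\theta)=\cos^{a}\theta$, or simply $w\equiv1$:
\[
\|T\|\le\sup_\theta\int_I|T(\theta,\varphi)|\dd\varphi.
\]
For fixed $\theta$, I substitute $t=\theta-\varphi$ and write $\cos\varphi=\cos(\theta-t)$. The resulting integral $\cos\theta\int\cos(\theta-t)F''(t)\dd t$ is finite because the singularity at $t\to\pi$ only occurs when $\theta\to\pi/2$. It can be computed in closed form using the antiderivatives $\int\frac{\cos(\theta-t)}{\sin^2t}\dd t$ (elementary: $-\cos\theta\cot t$ plus a $\sin\theta\,\ln\tan(t/2)$-type term) and the $1/t^2$ term. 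I then maximize over $\theta$; at $\theta=0$ it equals $\int_{-\pi/2}^{\pi/2}\cos\varphi\,F''(\varphi)\dd\varphi$. The main obstacle is controlling this supremum by $3\pi/5$ uniformly in $\theta$, including the endpoint behaviour. I expect the $\sin\theta\,\ln$ term to be handled by the elementary inequality above, and the final numeric comparison to be routine but delicate.
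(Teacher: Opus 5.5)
Several of your ingredients are right and match the paper's proof:
\begin{itemize}
\item the reduction $\|T_{\rm even}\|\le\|T\|$;
\item the near-diagonal bound $T\le F''(\pi/2)=1-4/\pi^2<3/5$ for $|\theta-\varphi|\le\pi/2$;
\item the final appeal to Schur's test with constant weight.
\end{itemize}
Reading off the monotonicity of $F''$ from \eqref{eq:fdoubleprimerepr} is a legitimate shortcut compared with the paper's elementary check of $F'''\ge0$ on $[0,\pi/2]$.

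However, the proposal never actually proves the bound. In the region $|\theta-\varphi|\ge\pi/2$ your estimate is $\cos\theta\cos\varphi/\sin^2(\theta-\varphi)\le\pi^2/16$. This exceeds $3/5$, so it only gives $\|T\|\le\pi^3/16>3\pi/5$. The step you put in its place is left undone: computing the row sums $\int_I T(\theta,\varphi)\dd\varphi$ in closed form and maximizing over $\theta$. That computation is not routine, since the individually singular pieces $1/\sin^2t$ and $1/t^2$ must be combined and sine- and cosine-integral terms appear. You yourself call it the main obstacle. As written, nothing establishes $\sup_\theta\int_I T(\theta,\varphi)\dd\varphi\le 3\pi/5$.

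The missing idea is a sharper pointwise bound in the far region, which makes any row-sum computation unnecessary. Your chain of Jordan's inequality, $\sin x\le x$ and AM--GM loses too much. Instead put $t=\theta-\varphi$ and use the product-to-sum identity
\[
\cos\theta\cos\varphi=\tfrac12\bigl(\cos t+\cos(\theta+\varphi)\bigr)\le\cos^2\tfrac t2 .
\]
For $|t|\ge\pi/2$, after dropping $-1/t^2$ from $F''$, this gives
\[
T(\theta,\varphi)\le\frac{\cos^2(t/2)}{\sin^2t}=\frac{1}{4\sin^2(t/2)}\le\frac12 .
\]
Combined with your near-region bound, $0\le T\le 3/5$ on $I\times I$. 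Schur's test with constant weight then gives $\|T\|\le\frac35|I|=\frac{3\pi}5$ at once, with no delicate numerics.

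Two side remarks:
\begin{itemize}
\item Your rejection of the Hilbert--Schmidt route is mistaken. Your own pointwise bound shows the kernel is bounded on the bounded set $I\times I$, hence square-integrable. Concretely, your weight $G(u)$ vanishes like $(\pi-u)^5/30$, not $(\pi-u)^3$: both cosines vanish linearly along the segment $\theta-\varphi=u$, and the segment has length comparable to $\pi-u$.
\item An antiderivative of $\cos\theta\cos t/\sin^2t$ is $-\cos\theta/\sin t$, not $-\cos\theta\cot t$.
\end{itemize}
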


\begin{proof}
    \emph{Step 1.} In this step we shall show that, for all $\theta,\varphi\in I$,
    \begin{equation}\label{eq:k-bound}
        0\leq T(\theta,\varphi)\leq\frac35.
    \end{equation}

Put $t:=\theta-\varphi$.  Since $\abs{\sin t}\leq\abs t$ for $\abs t<\pi$, formula \eqref{eq:Fpp} gives $F''(t)\geq0$.  Also $\cos\theta$ and $\cos\varphi$ are positive on $I$, proving the lower bound. We prove the upper bound in two regions.

\smallskip
\noindent\emph{Case 1: $\abs t\geq\pi/2$.}
We have
\begin{equation}\label{eq:cos-product-bound}
\cos\theta\cos\varphi = \frac{\cos(\theta-\varphi)+\cos(\theta+\varphi)}{2}
\leq \frac{1+\cos t}{2}
=\cos^2\frac t2.
\end{equation}
Dropping the negative term $-t^{-2}$ in \eqref{eq:Fpp}, we obtain
\begin{align*}
T(\theta,\varphi)
\leq \cos^2\frac t2\,\frac{1}{\sin^2t}
=\frac{1}{4\sin^2(t/2)}
\leq\frac12.
\end{align*}

\smallskip
\noindent\emph{Case 2: $\abs t\leq\pi/2$.}
We claim that $F''$ is increasing on $[0,\pi/2]$.  Indeed,
\begin{equation}\label{eq:F-third}
F'''(t)=\frac{2}{t^3}-\frac{2\cos t}{\sin^3t}.
\end{equation}
Thus $F'''(t)\geq0$ is equivalent to
\begin{equation}\label{eq:sine-cosine-cubic}
\left(\frac{\sin t}{t}\right)^3\geq\cos t.
\end{equation}
To prove this, set
\[
h(t):=3\ln\frac{\sin t}{t}-\ln\cos t,
\qquad 0<t<\frac{\pi}{2}.
\]
Then
\begin{equation}\label{eq:h-prime}
h'(t)=3\cot t-\frac3t+\tan t
=\frac{r(t)}{t\sin t\cos t},
\end{equation}
where
\[
r(t):=t(1+2\cos^2t)-3\sin t\cos t.
\]
A direct differentiation gives
\begin{equation}\label{eq:r-prime}
r'(t)=4\sin t\bigl(\sin t-t\cos t\bigr).
\end{equation}
The function $\sin t-t\cos t$ vanishes at zero and has derivative $t\sin t>0$.  Hence it is positive for $t>0$.  It follows from \eqref{eq:r-prime} that $r'(t)>0$, and since $r(0)=0$, we have $r(t)>0$.  Equation \eqref{eq:h-prime} now gives $h'(t)>0$.  Since $h(t)\to0$ as $t\downarrow0$, we obtain $h(t)\geq0$, which is exactly \eqref{eq:sine-cosine-cubic}.  This proves the claim that $F''$ is increasing.

Therefore, in the present region,
\[
F''(t)\leq F''\left(\frac{\pi}{2}\right)
=1-\frac{4}{\pi^2}<\frac35.
\]
Since $\cos\theta\cos\varphi\leq1$, the required bound follows.

\medskip

\emph{Step 2.} The kernel $T(\theta,\varphi)$ is symmetric and, by Step 1,
\[
\sup_{\theta\in I}\int_I\abs{T(\theta,\varphi)}\dd\varphi
\leq \frac35\abs I=\frac{3\pi}{5}.
\]
The lemma now follows from Schur's test.
\end{proof}


\subsection{An almost optimal bound}

In the remainder of this section, we prove the full strength of Proposition \ref{prop:normbound}. This is an entertaining exercise, but as we have already mentioned after stating this proposition, it can be ignored by readers who are only interested in the proof of Theorem \ref{thm:main}.


\subsection*{A trace--residual estimate}

Clearly, for a nonnegative trace class operator $A$, we have $\|A\|\leq\Tr A$. For our operator $T_{\rm even}$, which is nonnegative by Lemma \ref{lem:T-parity}, this, together with an evaluation of the trace in \eqref{eq:tau-exact} below, will give the bound $\|T_{\rm even}\|\leq 0.725$, which is already a significant improvement over Lemma \ref{lem:k-bound}. To further improve the bound, we will improve the abstract bound $\|A\|\leq\Tr A$ under the assumption that we have an `approximate eigenvector'.

\begin{lemma}\label{lem:trace-residual}
Let $A$ be a nonnegative trace-class operator on a Hilbert space, let
$v$ be a unit vector, and set
\[
a:=\ip{v}{Av},
\qquad
\rho:=\norm{(I-v\otimes v)Av},
\qquad
\tau:=\operatorname{Tr}A.
\]
Then
\begin{equation}\label{eq:trace-residual}
\norm{A}\leq
\frac{\tau+\sqrt{(2a-\tau)^2+4\rho^2}}{2}.
\end{equation}
\end{lemma}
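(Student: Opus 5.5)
The idea is to decompose $A$ in $2\times 2$ block form with respect to the orthogonal splitting $\mathcal H=\operatorname{span}\{v\}\oplus\{v\}^\perp$. Write $\Pi:=v\otimes v$ and $\Pi^\perp:=I-\Pi$. Then $A$ has the block form
\[
A=\begin{pmatrix} a & w^*\\ w & D\end{pmatrix},
\qquad w:=\Pi^\perp Av,\quad D:=\Pi^\perp A\Pi^\perp\big|_{\{v\}^\perp}.
\]
Here $\norm{w}=\rho$ and $D\geq0$. Since $\Tr A=a+\Tr D$, we get $\Tr D=\tau-a$.

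We want to bound $\norm{A}=\sup_{\norm{u}=1}\ip{u}{Au}$, using that $A\geq0$. Write a unit vector as $u=\cos\phi\,v+\sin\phi\,e$ with $e\perp v$ a unit vector. Then
\[
\ip{u}{Au}=a\cos^2\phi+2\sin\phi\cos\phi\,\operatorname{Re}\ip{e}{w}\cdot(\text{phase})+\sin^2\phi\,\ip{e}{De}.
\]
After absorbing phases into $e$, this is bounded by $a c^2+2cs\rho+s^2 d$, with $c,s\ge0$, $c^2+s^2=1$ and $d:=\norm{D}$. The middle bound uses $\abs{\ip{e}{w}}\le\rho$.

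Since $D\ge0$ is trace class, $\norm{D}\le\Tr D=\tau-a$. Hence $\norm{A}$ is at most the largest eigenvalue of the real symmetric matrix
\[
M=\begin{pmatrix} a & \rho\\ \rho & \tau-a\end{pmatrix},
\]
because $ac^2+2cs\rho+(\tau-a)s^2=\ip{(c,s)}{M(c,s)}$. This uses $d\le\tau-a$ and $s^2\ge0$, so that replacing $d$ by the larger number only increases the expression.

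The largest eigenvalue of $M$ is
\[
\frac{\tau}{2}+\sqrt{\Bigl(\frac{2a-\tau}{2}\Bigr)^2+\rho^2},
\]
which is exactly the right-hand side of \eqref{eq:trace-residual}.

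The only point needing care is the reduction to real $c,s\ge0$ with the cross term bounded by $2cs\rho$. Concretely, $2\operatorname{Re}\bigl(\bar\alpha\beta\ip{e}{w}\bigr)\le 2\abs\alpha\abs\beta\rho$ for $u=\alpha v+\beta e$, and the diagonal terms depend only on $\abs\alpha^2$ and $\abs\beta^2$. The edge case $\{v\}^\perp=\{0\}$ is trivial. None of these steps looks like a real obstacle; the argument is elementary once the block decomposition is written down.
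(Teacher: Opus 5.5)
Your proposal is correct and follows essentially the same route as the paper: the paper uses the same block decomposition of $A$ with respect to $\mathbb{C}v\oplus v^\perp$, bounds the lower-right block via $\norm{C}\leq\operatorname{Tr}C=\tau-a$, estimates the cross term by $2\rho\abs{z}\norm{w}$, and reads off the bound as the largest eigenvalue of $\begin{pmatrix}a&\rho\\ \rho&\tau-a\end{pmatrix}$. Your parametrization $u=\alpha v+\beta e$ and your handling of phases are a slightly more explicit version of the paper's one-line quadratic-form inequality.
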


\begin{proof}
With respect to the orthogonal decomposition
$\mathbb C v\oplus v^\perp$, write
\[
A=\begin{pmatrix}a&b^*\\ b&C\end{pmatrix}.
\]
Then $C\geq0$, $\norm b=\rho$, and
\[
\norm C\leq\operatorname{Tr}C=\tau-a.
\]
Consequently, for $z\in\mathbb C$ and $w\in v^\perp$,
\[
\ip{zv+w}{A(zv+w)}
\leq a\abs z^2+2\rho\abs z\norm w+(\tau-a)\norm w^2.
\]
The right side is the quadratic form of the two-by-two matrix
\[
\begin{pmatrix}a&\rho\\ \rho&\tau-a\end{pmatrix}.
\]
Its largest eigenvalue is the right side of
\eqref{eq:trace-residual}, and the assertion follows.
\end{proof}


\subsection*{The trace and a Rayleigh quotient}

We will apply Lemma \ref{lem:trace-residual} with $A=T_{\rm even}$ and with $v$ given by
\begin{equation}\label{eq:e0-def}
e_0(\theta):=\sqrt{\frac2\pi}\cos\theta \,,
\end{equation}
which is normalized in $L^2(I)$. As usual, we write
\[
\operatorname{Si}(x):=\int_0^x\frac{\sin t}{t}\dd t,
\qquad
\operatorname{Cin}(x):=\int_0^x\frac{1-\cos t}{t}\dd t.
\]

\begin{lemma}\label{lem:trace-and-rayleigh}
We have
\begin{align}
\tau&:= \Tr T_{\rm even} = \frac{\pi}{12}+\frac14\operatorname{Si}(\pi),
\label{eq:tau-exact}\\
a_0& :=\ip{e_0}{T_{\rm even}e_0}
=\operatorname{Si}(2\pi)
+\frac{\operatorname{Cin}(2\pi)}{\pi}-\frac\pi2.
\label{eq:a0-exact}
\end{align}
In particular,
\begin{equation}\label{eq:tau-a0-numerical}
\tau<0.725,
\qquad
0.6232<a_0<0.6234.
\end{equation}
\end{lemma}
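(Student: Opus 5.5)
We compute the two quantities separately. Throughout we use that $T_{\rm even}$ has the continuous kernel
\[
T_{\rm even}(\theta,\varphi)=\tfrac12\bigl(T(\theta,\varphi)+T(\theta,-\varphi)\bigr),
\]
which is bounded by Lemma~\ref{lem:k-bound}.

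\emph{The trace.} By Lemma~\ref{lem:T-parity}, $T_{\rm even}\geq0$, and its kernel is continuous on $\bar I\times\bar I$. Mercer's theorem therefore shows that $T_{\rm even}$ is trace class and that its trace is the integral of the diagonal:
\[
\tau=\tfrac12\int_I\cos^2\theta\,F''(0)\dd\theta+\tfrac12\int_I\cos^2\theta\,F''(2\theta)\dd\theta .
\]
The first term equals $\frac12\cdot\frac13\cdot\frac\pi2=\frac\pi{12}$. For the second term we use $\sin 2\theta=2\sin\theta\cos\theta$, which turns the integrand into
\[
\frac{\cos^2\theta}{\sin^22\theta}-\frac{\cos^2\theta}{4\theta^2}=\frac14\Bigl(\frac1{\sin^2\theta}-\frac1{\theta^2}+\frac{\sin^2\theta}{\theta^2}\Bigr).
\]
We then evaluate the two pieces:
\[
\int_I\Bigl(\frac1{\sin^2\theta}-\frac1{\theta^2}\Bigr)\dd\theta=2\Bigl[\frac1\theta-\cot\theta\Bigr]_0^{\pi/2}=\frac4\pi .
\]
For the other piece, integration by parts gives
\[
\int_I\frac{\sin^2\theta}{\theta^2}\dd\theta=2\Bigl(-\frac2\pi+\int_0^{\pi/2}\frac{\sin2\theta}{\theta}\dd\theta\Bigr)=-\frac4\pi+2\operatorname{Si}(\pi).
\]
Adding, the terms $\pm4/\pi$ cancel, and the factor $\frac12\cdot\frac14$ yields $\frac14\operatorname{Si}(\pi)$. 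This proves \eqref{eq:tau-exact}.

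\emph{The Rayleigh quotient.} Since $e_0$ is even, $a_0=\ip{e_0}{Te_0}$, so
\[
a_0=\frac2\pi\iint_{I\times I}\cos^2\theta\cos^2\varphi\,F''(\theta-\varphi)\dd\theta\dd\varphi .
\]
We substitute $t=\theta-\varphi$ and reduce this to the single integral
\[
a_0=\frac2\pi\int_{-\pi}^{\pi}F''(t)\,g(t)\dd t,
\qquad
g(t):=\int\cos^2\theta\cos^2(\theta-t)\dd\theta,
\]
where the inner integral runs over the overlap interval of length $\pi-|t|$. The function $g$ is an explicit trigonometric polynomial in $t$ times factors linear in $\pi-|t|$, and it vanishes to high order at $|t|=\pi$. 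Because $g$ is even, the integral equals $2\int_0^\pi F''g\dd t$.

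We then integrate by parts twice to move the derivatives onto $g$, obtaining $\int_0^\pi F\,g''\dd t$. This is legitimate because $g$ and $g'$ vanish at $t=\pi$, where $F$ has its logarithmic singularity, and because $F(0)=F'(0)=0$. Alternatively, one can integrate $(\csc^2 t-t^{-2})\,g(t)$ directly, using antiderivatives of $\csc^2$ for the first term and turning $t^{-2}$ times trigonometric terms into $\operatorname{Si}$ and $\operatorname{Cin}$ at argument $2\pi$ via one integration by parts. The cot-type boundary terms at $t=0$ cancel against the $1/t$ terms, just as in the trace computation. Collecting terms should give \eqref{eq:a0-exact}.

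\emph{Numerics and the main obstacle.} The numerical bounds \eqref{eq:tau-a0-numerical} follow from rigorous enclosures of $\operatorname{Si}(\pi)$, $\operatorname{Si}(2\pi)$ and $\operatorname{Cin}(2\pi)$. These come from their alternating power series
\[
\operatorname{Si}(x)=\sum_{k\geq0}\frac{(-1)^kx^{2k+1}}{(2k+1)(2k+1)!},
\qquad
\operatorname{Cin}(x)=\sum_{k\geq1}\frac{(-1)^{k+1}x^{2k}}{2k\,(2k)!}.
\]
Once the terms decrease, which they do after a few terms for $x\leq2\pi$, truncation errors are bounded by the first omitted term. I expect the main obstacle to be the bookkeeping in the $a_0$ computation: getting the explicit form of $g$ and tracking all boundary terms exactly. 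The tight window $0.6232<a_0<0.6234$ also requires enough series terms, roughly up to $k\approx12$ at $x=2\pi$.
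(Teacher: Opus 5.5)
\paragraph{The trace.} Your evaluation of the diagonal integral is correct. Splitting it via $\sin2\theta=2\sin\theta\cos\theta$ is a clean alternative to the paper's substitution $t=2\theta$ followed by integration by parts against $1+\cos t$.

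The justification for reducing the trace to that integral is wrong as stated, however: the kernel $\frac12\bigl(T(\theta,\varphi)+T(\theta,-\varphi)\bigr)$ is \emph{not} continuous on $\bar I\times\bar I$. Take $\theta=\frac\pi2-\epsilon$ and $\varphi=-\frac\pi2+\delta$. Then $T(\theta,\varphi)=\sin\epsilon\,\sin\delta\,\bigl(\sin^{-2}(\epsilon+\delta)-(\pi-\epsilon-\delta)^{-2}\bigr)$, whose limit along $\delta=c\epsilon$ is $c/(1+c)^2$. The even kernel inherits this discontinuity at all four corners; at the diagonal corners $(\pm\frac\pi2,\pm\frac\pi2)$ it enters through the term $T(\theta,-\varphi)$. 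So the classical Mercer theorem does not apply directly.

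This is repairable. Apply Mercer on $[-\frac\pi2+\epsilon,\frac\pi2-\epsilon]^2$, where the kernel is continuous, and let $\epsilon\to0$ using $\Tr(\chi_\epsilon A\chi_\epsilon)=\Tr(A^{1/2}\chi_\epsilon A^{1/2})\uparrow\Tr A$, with $A=T_{\rm even}\oplus0$. The paper avoids the issue altogether. The proof of Lemma~\ref{lem:T-parity} gives $T_{\rm even}=2\int_0^\infty\frac{r}{e^{\pi r}-1}\,a_r\otimes a_r\dd r$ with $a_r(\theta)=\cos\theta\cosh(r\theta)$. Tonelli, together with $2\cosh^2(r\theta)=1+\cosh(2r\theta)$ and \eqref{eq:fdoubleprimerepr}, then yields the same diagonal integral with no regularity hypothesis.

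\paragraph{The Rayleigh quotient.} The genuine gap is \eqref{eq:a0-exact}. Your reduction to $a_0=\frac4\pi\int_0^\pi F''(t)g(t)\dd t$ coincides with the paper's. From there you only say that collecting terms ``should give'' the formula: $g$ is never written down and none of the resulting integrals is evaluated, so the identity is asserted rather than proved. Integrating by parts twice is also a detour, since it leaves $\int_0^\pi\ln(t/\sin t)\,g''(t)\dd t$; your ``alternative'' is the efficient route, and it is the paper's.

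What is needed is the following.
\begin{enumerate}[label=(\roman*)]
\item The explicit form $g(t)=\frac{\pi-t}4+\frac{\pi-t}8\cos2t+\frac3{16}\sin2t$ on $[0,\pi]$. It behaves like $(\pi-t)^5/30$ at $t=\pi$, so the boundary terms vanish.
\item One integration by parts with $F'(t)=t^{-1}-\cot t$, giving
\[
\int_0^\pi F''g\dd t=\frac14\int_0^\pi\bigl((\pi-t)\sin2t+1-\cos2t\bigr)\Bigl(\frac1t-\cot t\Bigr)\dd t .
\]
\item The bracket equals $2\sin t\,\bigl((\pi-t)\cos t+\sin t\bigr)$. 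Hence its product with $\cot t$ is $2(\pi-t)\cos^2t+\sin2t$, whose integral is $\pi^2/2$.
\item The $1/t$ part equals $\pi\operatorname{Si}(2\pi)+\operatorname{Cin}(2\pi)$, because $\int_0^\pi\sin2t\dd t=0$.
\end{enumerate}
Multiplying the resulting value of $\int_0^\pi F''g\dd t$ by $4/\pi$ gives \eqref{eq:a0-exact}.

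\paragraph{The numerical bounds.} For \eqref{eq:tau-a0-numerical} you describe the method, which is the same as the paper's, but produce no enclosures. Since $a_0\approx0.62328$ lies within $10^{-4}$ of $0.6232$, you need explicit bounds such as $1.4181515<\operatorname{Si}(2\pi)<1.4181517$ and $2.4376533<\operatorname{Cin}(2\pi)<2.4376535$. For $\tau$ you need, for example, $\operatorname{Si}(\pi)<1.851938$.
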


\begin{proof}
In the proof of Lemma \ref{lem:T-parity} we have shown that
\begin{equation}
T_{\rm even}
=2\int_0^\infty\frac{r}{e^{\pi r}-1}
\,a_r\otimes a_r\dd r
\qquad\text{with}\
a_r(\theta):=\cos\theta\cosh(r\theta) \,.
\end{equation}
Taking the trace (which is well-defined for any nonnegative operator) and using Tonelli's
theorem, together with
$2\cosh^2(r\theta)=1+\cosh(2r\theta)$ and
\eqref{eq:fdoubleprimerepr}, gives
\begin{align*}
\tau
&=2\int_0^\infty\frac{r}{e^{\pi r}-1}\norm{a_r}_{L^2(I)}^2\dd r
=\frac12\int_I\cos^2\theta
\bigl(F''(0)+F''(2\theta)\bigr)\dd\theta \\
& =\frac{\pi}{12}
+\int_0^{\pi/2}\cos^2\theta F''(2\theta)\dd\theta 
=\frac{\pi}{12}
+\frac14\int_0^\pi(1+\cos t)F''(t)\dd t.
\end{align*}
Since $F'(t)=t^{-1}-\cot t$, integration by parts yields
\begin{align*}
\int_0^\pi(1+\cos t)F''(t)\dd t
&=\int_0^\pi\sin t\,F'(t)\dd t
=\int_0^\pi\left(\frac{\sin t}{t}-\cos t\right)\dd t
=\operatorname{Si}(\pi).
\end{align*}
Here the boundary term vanishes at both endpoints.  This proves
\eqref{eq:tau-exact}.

We next compute the Rayleigh quotient.  By symmetry and the change of
variables $t=\theta-\varphi$,
\begin{equation}\label{eq:a0-convolution}
a_0=\frac4\pi\int_0^\pi W(t)F''(t)\dd t,
\end{equation}
where
\[
W(t):=\int_{-\pi/2+t}^{\pi/2}
\cos^2\theta\cos^2(\theta-t)\dd\theta =\frac{\pi-t}{4}
+\frac{\pi-t}{8}\cos(2t)+\frac{3}{16}\sin(2t) \,.
\]
Integrating by parts in \eqref{eq:a0-convolution}, using
\begin{equation}\label{eq:W-prime}
W'(t)=\frac{t-\pi}{4}\sin(2t)
+\frac{\cos(2t)-1}{4}
\end{equation}
and $F'(t)=t^{-1}-\cot t$ and noting that the boundary terms vanish, gives
\begin{align*}
\int_0^\pi W(t)F''(t)\dd t
&=-\int_0^\pi W'(t)F'(t)\dd t
=\frac14\int_0^\pi
\left((\pi-t)\sin(2t)+1-\cos(2t)\right)
\left(\frac1t-\cot t\right)\dd t.
\end{align*}
The part containing $1/t$ equals
\begin{align*}
&\int_0^\pi
\frac{(\pi-t)\sin(2t)+1-\cos(2t)}{t}\dd t =\pi\operatorname{Si}(2\pi)
+\operatorname{Cin}(2\pi),
\end{align*}
whereas the part containing $\cot t$ equals
\begin{align*}
&\int_0^\pi
\left((\pi-t)\sin(2t)+1-\cos(2t)\right)\cot t\dd t
=\int_0^\pi
\left(2(\pi-t)\cos^2t+2\sin t\cos t\right)\dd t
=\frac{\pi^2}{2}.
\end{align*}
Together with \eqref{eq:a0-convolution}, this proves
\eqref{eq:a0-exact}.

The numerical inequalities in
\eqref{eq:tau-a0-numerical} follow from the fact that the Taylor series
\begin{align*}
\operatorname{Si}(x)
=\sum_{k=0}^\infty
\frac{(-1)^k x^{2k+1}}{(2k+1)(2k+1)!},
\qquad
\operatorname{Cin}(x)
=\sum_{k=1}^\infty
\frac{(-1)^{k+1}x^{2k}}{2k(2k)!}.
\end{align*}
are alternating. For $0<x\leq2\pi$, the absolute values of the terms in both tails
are decreasing from the third nonzero term onward.  Using
$3.1415926<\pi<3.1415927$, summing the series for
$\operatorname{Si}(\pi)$ through $k=8$ and those for
$\operatorname{Si}(2\pi)$ and $\operatorname{Cin}(2\pi)$ through
$k=13$, and bounding each remaining tail by the next term gives
\begin{align*}
1.8519370<\operatorname{Si}(\pi)&<1.8519380,\\
1.4181515<\operatorname{Si}(2\pi)&<1.4181517,\\
2.4376533<\operatorname{Cin}(2\pi)&<2.4376535.
\end{align*}
Substitution in \eqref{eq:tau-exact} and \eqref{eq:a0-exact} gives
\eqref{eq:tau-a0-numerical}.
\end{proof}


\subsection*{The residual of the trial vector}

We now show that $e_0$ is an approximate eigenvector of $T_{\rm even}$. Let
\[
\Delta_n:=b_n-b_{n+1} \,,
\qquad\text{where}\ 
b_1:=\frac12,
\qquad
b_n:=\frac{\ln n}{n^2-1}\quad(n\geq2) \,.
\]

\begin{lemma}\label{lem:e0-residual}
With $a_0$ as in Lemma~\ref{lem:trace-and-rayleigh}, one has
\begin{equation}\label{eq:rho-explicit-bound}
\rho_0^2:=\norm{T_{\rm even}e_0-a_0e_0}^2 \leq \frac{1}{\pi^2} \left(\Delta_1^2+\Delta_2^2+\Delta_3b_3\right).
\end{equation}
In particular,
\begin{align}
    \label{eq:e0-residual}
    \rho_0^2 < 0.0089.
\end{align}
\end{lemma}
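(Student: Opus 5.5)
The plan is to compute the residual by Parseval in an explicit orthonormal basis of the even subspace of $L^2(I)$. The natural choice is
\[
c_k(\theta):=\sqrt{\tfrac2\pi}\cos\bigl((2k+1)\theta\bigr),\qquad k\geq0.
\]
These functions form an orthonormal basis of the even functions on $I$, and $c_0=e_0$. Since $a_0=\ip{e_0}{T_{\rm even}e_0}$, the component of $T_{\rm even}e_0-a_0e_0$ along $e_0$ vanishes. Hence
\[
\rho_0^2=\sum_{k\geq1}\abs{\ip{c_k}{Te_0}}^2 ,
\]
and the task reduces to evaluating or bounding the matrix entries $m_k:=\ip{c_k}{Te_0}$ for $k\geq1$.

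To compute $m_k$, I would proceed as in the proof of Lemma~\ref{lem:trace-and-rayleigh}. Substitute $t=\theta-\varphi$ and write
\[
m_k=\frac2\pi\int_{-\pi}^{\pi}W_k(t)F''(t)\dd t,
\qquad
W_k(t)=\int \cos\theta\cos((2k+1)\theta)\cos^2(\theta-t)\dd\theta ,
\]
where the $\theta$-integral runs over the overlap of the two copies of $I$. The weight $W_k$ is an explicit trigonometric polynomial in $t$ with coefficients linear in $(\pi-\abs t)$, and it vanishes at $t=\pm\pi$ together with its derivative. Integrating by parts once moves the derivative onto $W_k$ and replaces $F''$ by $F'(t)=t^{-1}-\cot t$, with no boundary terms.

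The $\cot t$ part then gives elementary integrals. The $1/t$ part gives combinations of $\operatorname{Si}$ and $\operatorname{Cin}$ at multiples of $2\pi$. The hope is that these pieces combine, via the standard evaluation
\[
\int_0^\pi \frac{\cos(mt)-\cos(nt)}{t}\dd t ,
\]
whose large-interval part is $\ln(n/m)$ up to $\operatorname{Ci}$ corrections, into something governed by the numbers $b_n=\ln n/(n^2-1)$. Concretely, I would aim for a bound of the form
\[
\abs{m_k}\leq \frac{\Delta_k}{\pi},\qquad k\geq1,
\]
or possibly an exact identity of that form. The $\ln n$ with the $n^2-1$ denominator is suggestive of partial fractions in $2k+1\pm1$, which is why I expect $b_n$ to appear with $n$ indexing the harmonics.

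Granting this, the summation is straightforward. I would check by hand that $\Delta_n\geq0$ and that $\Delta_n$ is nonincreasing for $n\geq3$. This amounts to convexity and monotonicity of $x\mapsto\ln x/(x^2-1)$ for $x\geq3$. Then
\[
\sum_{n\geq3}\Delta_n^2\leq \Delta_3\sum_{n\geq3}\Delta_n=\Delta_3 b_3 ,
\]
since the sum telescopes and $b_n\to0$. This yields \eqref{eq:rho-explicit-bound}. The numerical bound \eqref{eq:e0-residual} follows by evaluating $\Delta_1=\tfrac12-\tfrac{\ln2}{3}$, $\Delta_2=\tfrac{\ln2}{3}-\tfrac{\ln3}{8}$ and $\Delta_3 b_3=\bigl(\tfrac{\ln3}{8}-\tfrac{\ln4}{15}\bigr)\tfrac{\ln3}{8}$ with crude decimal bounds on $\ln2$ and $\ln3$.

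The main obstacle is the middle step: getting a clean closed form, or a sharp enough bound, for $m_k$. The $\operatorname{Si}$ and $\operatorname{Ci}$ contributions from the $1/t$ term must cancel or be controlled so that only the logarithmic differences $b_k-b_{k+1}$ survive. If an exact formula is out of reach, my first fallback is to use the series \eqref{eq:fdoubleprimerepr}, or the partial-fraction expansion of $F''$, to write $m_k$ as a sum over $j$ of elementary integrals. I would then bound the $j\geq2$ terms crudely, keeping the dominant $j=1$ term exactly.
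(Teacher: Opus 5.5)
Your reduction is the same as the paper's: Parseval in the basis $c_k=\sqrt{2/\pi}\cos((2k+1)\theta)$, the vanishing of the $c_0$-component of the residual, monotonicity and convexity of $\ln x/(x^2-1)$ on $[3,\infty)$, telescoping to $\Delta_3b_3$, and the decimal evaluation. The gap is the step everything rests on, namely $\abs{m_k}\leq\Delta_k/\pi$ for $k\geq1$, which you only hope for. The mechanism you expect in the $t$-variable also does not occur, because the sine/cosine-integral terms do not cancel. In fact
\[
m_k=\frac{(-1)^k}{\pi}\Bigl(\Delta_k-\int_0^\infty r^3e^{-\pi r}d_k(r)d_0(r)\dd r\Bigr),
\qquad
d_n(r):=\frac{1}{r^2+4n^2}-\frac{1}{r^2+4(n+1)^2}.
\]
The correction term, which in closed form involves $\operatorname{Si}$ and $\operatorname{Ci}$ at multiples of $2\pi$, is strictly positive: roughly $0.015$ against $\Delta_1\approx0.269$ when $k=1$. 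So no identity $\abs{m_k}=\Delta_k/\pi$ holds. From a $t$-domain closed form you would still have to show that this correction lies in $[0,2\Delta_k]$ for every $k$, and your plan does not address that. Your fallback (expanding $F''$ over $j$ and bounding the $j\geq2$ terms crudely) does not explain how $\Delta_k$ would emerge either. Crude tail bounds would not reproduce the exact differences $b_k-b_{k+1}$ that the telescoping needs.

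The missing idea is to use \eqref{eq:fdoubleprimerepr} as an integral over $r$, i.e.\ as a superposition of rank-one operators. As in the proof of Lemma~\ref{lem:T-parity}, $T_{\rm even}=2\int_0^\infty\frac{r}{e^{\pi r}-1}\,a_r\otimes a_r\dd r$ with $a_r(\theta)=\cos\theta\cosh(r\theta)$. The projections are explicit: using $2\cos\theta\cos((2n+1)\theta)=\cos(2n\theta)+\cos(2(n+1)\theta)$, one gets $\ip{c_n}{a_r}=(-1)^n\sqrt{2/\pi}\,r\sinh(\pi r/2)\,d_n(r)$. Since $\sinh^2(\pi r/2)/(e^{\pi r}-1)=(1-e^{-\pi r})/4$, this yields
\[
m_n=\frac{(-1)^n}{\pi}\int_0^\infty r^3\bigl(1-e^{-\pi r}\bigr)d_n(r)d_0(r)\dd r,
\]
whose integrand is nonnegative. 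Drop the factor $0\leq1-e^{-\pi r}\leq1$. Then use $r^3d_0(r)=4r/(r^2+4)$ and $4\int_0^\infty\frac{r\dd r}{(r^2+4)(r^2+4a^2)}=\frac{\ln a}{a^2-1}$ (value $\frac12$ at $a=1$). This gives exactly $\abs{m_n}\leq\Delta_n/\pi$. In this representation, the sign and size of the correction, which are invisible in your $t$-domain computation, reduce to the trivial inequality $0\leq 1-e^{-\pi r}\leq1$.
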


\begin{proof}
For $n\geq0$, put
\begin{equation}\label{eq:en-def}
e_n(\theta):=\sqrt{\frac2\pi}\cos((2n+1)\theta) \,.
\end{equation}
The functions $(e_n)_{n\geq0}$ form an orthonormal basis of the even
subspace of $L^2(I)$. Using
$2\cos\theta\cos((2n+1)\theta)
=\cos(2n\theta)+\cos(2(n+1)\theta)$ and integrating each term, we find
\begin{equation}\label{eq:An-formula}
\int_I e_n(\theta)\cos\theta\cosh(r\theta)\dd\theta
=(-1)^n\sqrt{\frac2\pi}\,
 r\sinh\left(\frac{\pi r}{2}\right)d_n(r)
\end{equation}
with
\begin{equation}\label{eq:dn-def}
d_n(r):=\frac{1}{r^2+4n^2}
-\frac{1}{r^2+4(n+1)^2} \,.
\end{equation}

Combining \eqref{eq:An-formula} with
\eqref{eq:fdoubleprimerepr},
we obtain
\begin{equation}\label{eq:T-matrix-e0}
\ip{e_n}{T_{\rm even}e_0}
=\frac{(-1)^n}{\pi}\int_0^\infty
r^3(1-e^{-\pi r})d_n(r)d_0(r)\dd r.
\end{equation}
For $n\geq1$, all factors in the integral
are nonnegative.  Dropping the factor $1-e^{-\pi r}$ gives
\begin{equation}\label{eq:T-matrix-upper}
\abs{\ip{e_n}{T_{\rm even}e_0}}
\leq\frac1\pi\int_0^\infty r^3d_n(r)d_0(r)\dd r = \frac1\pi \left( b_n-b_{n+1}\right) = \frac1\pi\,\Delta_n \,,
\end{equation}
where the first equality comes from
\[
4\int_0^\infty
\frac{r\dd r}{(r^2+4)(r^2+4a^2)}
=\frac{\ln a}{a^2-1}\quad(a>1),
\]
with the limiting value $1/2$ at $a=1$.
The function
\[
b(x):=\frac{\ln x}{x^2-1}
\]
is decreasing and convex on $[3,\infty)$.  Indeed,
\[
b'(x)=\frac{x^2-1-2x^2\ln x}{x(x^2-1)^2}<0
\]
and
\[
b''(x)=
\frac{6x^4\ln x-5x^4+2x^2\ln x+6x^2-1}
{x^2(x^2-1)^3}>0
\qquad(x\geq3).
\]
For $x\geq3$ one has $\ln x>1$, so the numerator of $b'(x)$
is smaller than $-x^2-1$, while the numerator of $b''(x)$ is larger
than $x^4-1$.  Thus the displayed signs are justified, and it follows
that $(\Delta_n)_{n\geq3}$ is decreasing.  Hence, by
Parseval's identity, \eqref{eq:T-matrix-upper}, and telescoping,
\begin{align*}
\rho_0^2
&=\sum_{n=1}^\infty
\abs{\ip{e_n}{T_{\rm even}e_0}}^2\notag
\leq\frac1{\pi^2}\sum_{n=1}^\infty\Delta_n^2\notag
\leq\frac1{\pi^2}
\left(\Delta_1^2+\Delta_2^2
+\Delta_3\sum_{n=3}^\infty\Delta_n\right)\notag
=\frac1{\pi^2}
\left(\Delta_1^2+\Delta_2^2+\Delta_3b_3\right),
\end{align*}
proving \eqref{eq:rho-explicit-bound}.
Finally, to prove the numerical bound \eqref{eq:e0-residual}, we use
\[
3.14159<\pi,
\qquad
0.693147<\ln2<0.693148,
\qquad
1.098612<\ln3<1.098613 \,,
\]
which imply
\[
\Delta_1<0.268951,
\quad
\Delta_2<0.093723,
\quad
\Delta_3<0.044908,
\quad
b_3<0.137327.
\]
Substitution into \eqref{eq:rho-explicit-bound} gives
\[
\rho_0^2
<\frac{0.268951^2+0.093723^2+0.044908\cdot0.137327}
{3.14159^2}
<0.00885<0.0089,
\]
which proves the lemma.
\end{proof}


\subsection*{Proof of Proposition \ref{prop:normbound}}

We apply Lemma~\ref{lem:trace-residual} to
$A=T_{\rm even}$ and $v=e_0$.  Lemmas
\ref{lem:trace-and-rayleigh} and \ref{lem:e0-residual} give
\[
\tau<0.725,
\qquad
a_0<0.6234,
\qquad
\rho_0^2<0.0089.
\]
For
\[
G(\tau,a,\eta)
:=\frac{\tau+\sqrt{(2a-\tau)^2+4\eta}}{2},
\]
one has $\partial_\tau G>0$, $\partial_\eta G>0$, and
\[
\partial_aG
=\frac{2a-\tau}{\sqrt{(2a-\tau)^2+4\eta}}>0
\]
in the present numerical range.  Therefore
\begin{align*}
\norm{T_{\rm even}}
&<\frac{0.725+
\sqrt{(2\cdot0.6234-0.725)^2+4\cdot0.0089}}{2}<0.639933<0.64.
\end{align*}
This completes the proof of Proposition \ref{prop:normbound}.
\qed


\section{The maximal Kato strip of the arctangent}\label{sec:arctank1}

Recall from \eqref{eq:theta-s} that $\vartheta_s(x)=\arctan(x/s)$.

\begin{proposition}\label{prop:arctan-strip}
For $s>0$, the function $\vartheta_s$ belongs to $K_s$.  It does not belong to $K_a$ for any $a>s$. The function $-\vartheta_s$ does not belong to $K_a$ for any $a>0$.
\end{proposition}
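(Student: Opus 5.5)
We would prove the three assertions by working directly with the analytic continuation of $\vartheta_s$, namely
\[
\vartheta_s(z)=\arctan(z/s)=\frac{1}{2i}\ln\frac{1+iz/s}{1-iz/s}.
\]
Its only singularities are the logarithmic branch points $z=\pm is$. By scaling $z\mapsto z/s$ it suffices to treat $s=1$; the strip $S_a$ for $\vartheta_s$ then corresponds to $S_{a/s}$ for $\arctan$.

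\emph{Membership in $K_1$.} We would use the Poisson-type representation
\[
\operatorname{Im}\arctan(z)=\frac14\ln\frac{x^2+(1+y)^2}{x^2+(1-y)^2},\qquad z=x+iy,\ \abs y<1.
\]
This can be checked either from the logarithmic formula above or from $\arctan(z)=\int_0^1 z/(1+t^2z^2)\,dt$. For $0<y<1$ we have $(1+y)^2>(1-y)^2$, so the right side is positive. For $-1<y<0$ it is negative. Hence $\operatorname{Im}\arctan(z)\,\operatorname{Im}z\ge0$ on $S_1$. Analyticity on $S_1$ is clear, since the branch points $\pm i$ lie on the boundary and the principal branch is analytic on $\C\setminus\{iy:\abs y\ge1\}$. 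Boundedness on $\R$ is also clear.

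\emph{Failure for $a>1$.} Suppose some analytic continuation $h$ of $\arctan$ existed on $S_a$ with $a>1$. By uniqueness of analytic continuation, $h$ would agree with the principal $\arctan$ on $S_1$. However, $\arctan(z)\to\infty$ logarithmically as $z\to i$ from within $S_1$, because $\operatorname{Im}\arctan(iy)=\operatorname{artanh}(y)\to\infty$ as $y\uparrow1$. This contradicts continuity of $h$ at the interior point $i\in S_a$. So no analytic continuation to $S_a$ exists, and $\arctan\notin K_a$.

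\emph{The negative function.} For $-\vartheta_s$ with any $a>0$, shrinking $a$ only weakens the hypothesis, so we may take $a\le s$. The continuation of $-\vartheta_s$ to $S_a$ is $-\arctan(z/s)$, again by uniqueness. By the sign computation above, at $z=iy$ with $0<y<a$ we get $\operatorname{Im}(-\vartheta_s(iy))=-\operatorname{artanh}(y/s)<0$ while $\operatorname{Im}z>0$. This violates \eqref{eq:Kato-sign}.

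The only point requiring care is the uniqueness-of-continuation argument in the second step. One must make sure that any hypothetical continuation on the larger strip coincides with the principal branch on $S_1$. This follows because $S_1$ is connected and contains $\R$, where both functions agree. The computations themselves are routine.
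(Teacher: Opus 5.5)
Your proof is correct. The membership part coincides with the paper's: the same reduction to $s=1$ and the same formula \eqref{eq:Im-arctan}. You use the principal branch, where the paper instead defines $\Theta(z)=\int_0^z\frac{\dd w}{1+w^2}$ on the simply connected strip $S_1$.

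The other two parts reach the contradiction differently.

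For $a>1$, the paper never identifies the continuation $G$ on $S_1$. It notes that $H(z)=(1+z^2)G'(z)-1$ vanishes on $\R$, hence on all of $S_a$, yet $H(i)=-1$. You instead use the identity theorem on $S_1$ to identify $h$ with the principal branch. You then contradict continuity at the interior point $i$ via $\operatorname{Im}\arctan(iy)=\operatorname{artanh}y\to\infty$. The paper's route needs only $G'$ on the real line, while yours reuses the explicit formula; both are equally elementary.

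For $-\vartheta_s$, the paper combines the two opposite sign conditions to get $\operatorname{Im}\tilde G=0$ on $S_{\min\{1,a\}}\setminus\R$. It then invokes the rigidity of analytic functions with vanishing imaginary part, together with $\tilde G(0)=0$. You use the strict sign in \eqref{eq:Im-arctan} to exhibit a violating point $z=iy$, $0<y<a$, directly, which is shorter. Your reduction to $a\le s$ by restriction plays the role of the paper's passage to $S_{\min\{1,a\}}$.
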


\begin{proof}
It is enough to prove the assertion for $s=1$, since $\vartheta_s(z)=\vartheta_1(z/s)$.

The strip $S_1$ is simply connected and does not contain either of the two zeros $\pm i$ of $1+z^2$.  Therefore the function
\[
\Theta(z):=\int_0^z\frac{\dd w}{1+w^2},
\qquad z\in S_1,
\]
is well defined and analytic.  On the real axis it agrees with the usual real arctangent $\vartheta_1$.

For $z=x+iy\in S_1$, the logarithmic formula
\[
\Theta(z)=\frac{1}{2i}\log\frac{1+iz}{1-iz}
\]
can be taken with branches that are analytic in the strip.  Taking the imaginary part gives
\begin{equation}\label{eq:Im-arctan}
\operatorname{Im}\Theta(x+iy)
=\frac14\ln
\frac{x^2+(1+y)^2}{x^2+(1-y)^2}.
\end{equation}
If $y>0$, then $(1+y)^2>(1-y)^2$, so the right side is positive.  If $y<0$, it is negative.  Thus
\[
\operatorname{Im}\Theta(z)\operatorname{Im}z\geq0,
\]
and hence $\vartheta_1\in K_1$.

Suppose now that $\vartheta_1$ belonged to $K_a$ for some $a>1$, and let $G$ be its analytic continuation to $S_a$.  On the real axis,
\[
G'(x)=\frac{1}{1+x^2}.
\]
The function
\[
H(z):=(1+z^2)G'(z)-1
\]
is analytic throughout $S_a$ and vanishes on the real axis.  By the identity theorem, $H$ vanishes identically.  Since $i\in S_a$, evaluation at $z=i$ gives
\[
0=H(i)=-1,
\]
a contradiction.  Thus the maximal strip half-width for $\vartheta_1$ is one.

Similarly, suppose that $-\vartheta_1$ belonged to $K_a$ for some $a>0$ and let $\tilde G$ be its analytic continuation to $S_a$. With $\Theta$ as before, we see that the analytic function $\tilde G+\Theta$ vanishes on the real axis and therefore in $S_{\min\{1,a\}}$. Thus, for all $z\in S_{\min\{1,a\}}$,
$$
\operatorname{Im} \tilde G(z)\operatorname{Im} z = -\operatorname{Im} \Theta(z)\operatorname{Im} z \leq 0 \,,
$$
Together with the assumption $\tilde G\in K_a$, we obtain $\operatorname{Im} \tilde G(z)=0$ for all $z\in S_{\min\{1,a\}}\setminus\R$. By a well-known theorem about analytic functions, this implies that $\tilde G$ vanishes identically, a contradiction.
\end{proof}


\section*{Acknowledgments}
R.L.F.~acknowledges partial support through the German Research Foundation grants EXC-2111-390814868 and TRR 352--Project-ID 470903074. P.I.~acknowledges partial support from the US NSF CAREER grant DMS-2152401, US NSF grant DMS-2554183, a Simons Fellowship, and a Humboldt Research Fellowship for Experienced Researchers. The authors acknowledge the use of AI tools. All mathematical arguments and proofs in the final manuscript were checked and written by the authors.


\end{document}